\definecolor{webgreen}{rgb}{0,.5,0}
\definecolor{amber}{rgb}{1.0, 0.49, 0.0}
\numberwithin{equation}{section}
\def\C{{\mathds{C}}}
\def\N{{\mathds{N}}}
\def\Z{{\mathds{Z}}}
\def\P{{\mathds{P}}}
\def\1{{\bf 1}}
\def\Real{\operatorname{Re}}
\def\Imag{\operatorname{Im}}
\newcommand{\DOT}{\text{\rm\Huge{.}}}
\newtheorem{theorem}{Theorem}[section]
\newtheorem{corollary}[theorem]{Corollary}
\begin{document}
	
	\title{{\bf 
			Generalized cyclotomic polynomials  
			associated with regular systems of divisors and arbitrary sets of positive 
			integers}}
	\author{L\'aszl\'o T\'oth \\ \\ Department of Mathematics, University of P\'ecs \\
		Ifj\'us\'ag \'utja 6, 7624 P\'ecs, Hungary \\ E-mail: {\tt ltoth@gamma.ttk.pte.hu}}
	\date{}
	\maketitle
	
	\begin{abstract} 
		We introduce and study the generalized cyclotomic polynomials $\Phi_{A,S,n}(x)$ 
		associated with a regular system $A$ of divisors and an arbitrary set $S$ of positive 
		integers. We show that all of these polynomials have integer coefficients, they can be expressed as the product
		of certain classical cyclotomic polynomials $\Phi_d(x)$ with $d\mid n$, and enjoy many other 
		properties which are similar to the classical and unitary cases. 
		We also point out some related Menon-type identities. One of them seems to be new even for the 
		cyclotomic polynomials $\Phi_n(x)$. 
	\end{abstract}
	
	{\sl 2020 Mathematics Subject Classification}: Primary: 11B83; Secondary 11A05, 11A25, 11C08
	
	{\sl Key Words and Phrases}: cyclotomic polynomial, unitary cyclotomic polynomial,
	regular system of divisors, Euler's function, Ramanujan sum, Dirichlet character, Menon's identity

	\section{Introduction}
	
	The cyclotomic polynomials $\Phi_n(x)$ are defined by
	\begin{equation*} 
		\Phi_n(x) = \prod_{\substack{j=1\\ (j,n)=1}}^n \left(x- \zeta_n^j \right),
	\end{equation*}
	where $\zeta_n:= e^{2\pi i/n}$, $n\in \N:=\{1,2,\ldots\}$. 
	
	We recall that a divisor $d$ of $n$ ($d,n\in \N$) is a unitary divisor if $(d,n/d)=1$,
	notation $d\mid \mid n$. The unitary cyclotomic polynomials $\Phi_n^*(x)$ are defined using unitary divisors, as follows:
	\begin{equation*} 
		\Phi_n^*(x)= \prod_{\substack{j=1\\ (j,n)_*=1}}^n \left(x- \zeta_n^j \right),
	\end{equation*}
	where $(j,n)_* = \max \{d\in \N: d\mid j, d\mid \mid n\}$. These polynomials have properties similar
	to the classical cyclotomic polynomials. For example, for every $n\in \N$,
	\begin{equation} \label{x_n_unit}
		x^n-1 = \prod_{d\mid \mid n} \Phi_d^*(x),    
	\end{equation}
	and 
	\begin{equation*}
		\Phi_n^*(x)= \prod_{d\mid \mid n} \left(x^d-1 \right)^{\mu^*(n/d)},    
	\end{equation*}
	where $\mu^*(n)= (-1)^{\omega(n)}$ is the unitary M\"{o}bius function, $\omega(n)$ denoting the number of distinct prime divisors of $n$. Also, the unitary cyclotomic polynomials are in the class of inclusion-exclusion polynomials. 
	See Jones et al. \cite{JonTotetal2020}, Moree and the author \cite{MorTot2020} for a detailed discussion of these and some related properties. Also see Bachman \cite{Bac2021}.
	
	However, not all polynomials $\Phi_n^*(x)$ are irreducible over the rationals. More exactly, for every $n\in \N$ one has the irreducible factorization 
	\begin{equation} \label{Phi_star_prod}
		\Phi_n^*(x)= \prod_{\substack{d\mid n\\ \kappa(d)=\kappa(n)}} \Phi_d(x),  
	\end{equation}
	where $\kappa(n)= \prod_{p\mid n} p$ is the square-free kernel of $n$. 
	See \cite[Th.\ 2]{MorTot2020}. It follows from \eqref{Phi_star_prod} that $\Phi_n^*(x)$ have integer 
	coefficients. 
	
	The polynomials 
	\begin{equation} \label{def_Q_n}
		Q_n(x) = \prod_{\substack{j=1\\ (j,n)\text { a square}}}^n \left(x- \zeta_n^j \right),
	\end{equation}
	have been discussed by Sivaramakrishnan \cite[Sect.\ X.5]{Siv1989}. For every $n\in \N$ we have 
	the identities
	\begin{equation} \label{x_n_square}
		x^n-1 = \prod_{\substack{d\mid n\\ n/d \text{ squarefree}}} Q_d(x),    
	\end{equation}
	\begin{equation*}
		Q_n(x)= \prod_{d\mid n} \left(x^d-1 \right)^{\lambda(n/d)},    
	\end{equation*}
	where $\lambda(n) =(-1)^{\Omega(n)}$ is the Liouville function, $\Omega(n)$ denoting the number of distinct prime power divisors of $n$. At the same time,
	\begin{equation*} 
		Q_n(x)= \prod_{d^2\mid n} \Phi_{n/d^2}(x), 
	\end{equation*}
	hence $Q_n(x)$ have integer coefficients as well. 
	
	The inverse cyclotomic polynomials are defined by
	\begin{equation*}
		\Psi_n(x)=\prod_{\substack{j=1\\ (j,n)>1}}^n \left(x- \zeta_n^j \right),
	\end{equation*}
	see Moree \cite{Mor2009}. One has
	\begin{equation*}
		\Psi_n(x)= \frac{x^n-1}{\Phi_n(x)}= \prod_{\substack{d\mid n\\ d<n}} \Phi_d(x).  
	\end{equation*}
	
	It is the goal of the present paper to investigate a common generalization of all of the above polynomials.
	Let $A = (A(n))_{n\in \N}$ be a regular system of divisors. See Section \ref{Sect_regular} for its definition and some basic properties. In particular, $D=(D(n))_{n\in \N}$ and $U=(U(n))_{n\in \N}$ are regular, where $D(n)$ is the set
	of all divisors of $n$ and $U(n)$ is the set of unitary divisors of $n$. Furthermore, let $S$ be an arbitrary nonempty subset of $\N$. We define the cyclotomic polynomials $\Phi_{A,S,n}(x)$ by
	\begin{equation} \label{Phi_A_S}
		\Phi_{A,S,n}(x)= \prod_{\substack{j=1\\ (j,n)_A\in S}}^n \left(x- \zeta_n^j \right),
	\end{equation}
	where  
	\begin{equation} \label{(j,n)_A}
		(j,n)_A = \max \{d\in \N: d\mid j, d\in A(n)\}. 
	\end{equation}
	
	If $S=\{1\}$ and $A$ is a regular system of divisors, then \eqref{Phi_A_S} reduces to the polynomials 
	\begin{equation} \label{Phi_A}
		\Phi_{A,n}(x) :=\Phi_{A,\{1\},n}(x) = \prod_{\substack{j=1\\ (j,n)_A=1}}^n \left(x- \zeta_n^j \right),
	\end{equation}
	introduced by Nageswara Rao \cite{Nag1976}. Here \eqref{Phi_A} recovers $\Phi_n(x)$ and $\Phi_n^*(x)$ if 
	$A(n)=D(n)$ and $A(n)=U(n)$, respectively.
	If $A(n)=D(n)$ and $S$ is arbitrary, then \eqref{Phi_A_S} are the polynomials investigated 
	by the author \cite{Tot1990}, which recover the polynomials $Q_n(x)$ and $\Psi_n(x)$ if $S$ is the set of squares
	and $S=\N\setminus \{1\}$, respectively.
	
	Note that generalizations of arithmetic functions, in particular Euler's function and Ramanujan sums, associated to regular systems of divisors and arbitrary sets are known in the literature, see Section \ref{Sect_func}. However, the corresponding cyclotomic
	polynomials have not been considered, as far as we know. Further generalizations can also be studied. Given a 
	regular system $A$ of divisors and $k\in \N$ one can consider the system $A_k=(A_k(n))_{n\in \N}$ with
	$A_k(n)=\{d\in \N: d^k\in A(n^k)\}$ and the cyclotomics defined with respect to $A_k$. See Section \ref{Sect_regular} 
	for some more details. However, we confine ourselves with the previous generalizations.
	
	We show that all polynomials $\Phi_{A,S,n}(x)$ have integer coefficients and they can be expressed as the product
	of certain polynomials $\Phi_d(x)$ with $d\mid n$. We also point out some other properties and identities 
	concerning the polynomials $\Phi_{A,S,n}(x)$ and $\Phi_{A,n}(x)$. One of them seems to be new even for the classical
	cyclotomic polynomials. Namely, let $n\in \N$ and let $\chi$ be an arbitrary Dirichlet character (mod $n$) with conductor 
	$d$ ($d\mid n$). Then for real $x>1$ (or formally),
	\begin{equation} \label{Menon_D_new}
		\prod_{j=1}^n \left(x^{(j-1,n)}-1\right)^{\Real(\chi(j))} = \prod_{\delta \mid n/d} \Phi_{d\delta}(x)^{\varphi(n)/\varphi(d\delta)}, 
	\end{equation}
	where $\varphi$ is Euler's function. If $\chi$ is a primitive character (mod $n$), then $d=n$, and \eqref{Menon_D_new}
	gives 
	\begin{equation} \label{Menon_D_new_primitive}
		\Phi_n(x)= \prod_{j=1}^n \left(x^{(j-1,n)}-1\right)^{\Real(\chi(j))}.
	\end{equation}
	
	Note that the products in \eqref{Menon_D_new} and \eqref{Menon_D_new_primitive} are, in fact, over $j$ with $(j,n)=1$,
	since $\chi(j)=0$ for $(j,n)>1$. These are Menon-type identities, involving $(j-1,n)$, where $j$ runs over a reduced residue system (mod $n$). The original Menon identity reads
	\begin{equation} \label{Menon_id_origi}
		\sum_{\substack{j=1\\ (j,n)=1}}^n (j-1,n) = \tau(n)\varphi(n) \quad (n\in \N),
	\end{equation}
	where $\tau(n)$ is the number of divisors of $n$. See the survey by the author \cite{Tot2023}.
	
	We also deduce generalizations of the M\"oller-Endo identities and the Grytczuk-Tropak recursion formula
	concerning the coefficients of the discussed cyclotomic polynomials. The main results are included in Section 
	\ref{Section_Results} and their proofs are given in Section \ref{Section_Proofs}.
	
	For an overview of properties of the classical cyclotomic polynomials, in particular those discussed and generalized
	in the present paper, we refer to Gallot et al. \cite{GalMorHom2011}, Herrera-Poyatos and Moree \cite{HerMor2021}, Sanna \cite{San2022}.
	
	\section{Preliminaries}
	
	\subsection{Regular systems of divisors} \label{Sect_regular}
	Let $A(n)$ be a subset of the set of positive divisors of $n$ for every $n\in \N$. 
	The $A$-convolution of the functions
	$f, g : \N \to \C$ is defined by
	\begin{equation} \label{A_convo}
		(f *_A g)(n) = \sum_{d\in A(n)} f(d)g(n/d) \quad (n\in  \N).
	\end{equation}
	
	Let ${\cal A}$ denote the set of arithmetic functions $f:\N \to \C$. The convolution \eqref{A_convo} and the system $A = (A(n))_{n\in \N}$ of divisors are called regular, cf. Narkiewicz \cite{Nar1963}, if the following conditions hold true:
	
	(a) $({\cal A},+, *_A)$ is a commutative ring with unity,
	
	(b) the $A$-convolution of multiplicative functions is multiplicative,
	
	(c) the constant $1$ function has an inverse $\mu_A$ (generalized M\"{o}bius function)
	with respect to $*_A$ and $\mu_A(p^a)\in  \{-1, 0\}$ for every prime power $p^a$ ($a \ge  1$).
	
	It can be shown that the system $A = (A(n))_{n\in \N}$ is regular if and only if
	
	(i) $A(mn) = \{de : d \in A(m), e\in  A(n)\}$ for every $m, n\in \N$ with $(m, n) = 1$,
	
	(ii) for every prime power $p^a$ ($a \ge 1$) there exists a divisor $t = t_A(p^a)$ of $a$, called
	the type of $p^a$ with respect to $A$, such that
	\begin{equation*}
		A(p^{it}) = \{1, p^t, p^{2t},\ldots , p^{it}\}
	\end{equation*}
	for every $i\in \{0, 1,\ldots, a/t\}$.
	
	Given a regular system $A$, an integer $n>1$ is called primitive with respect to $A$ ($A$-primitive) 
	if $A(n)=\{1,n\}$. By (i) if $n$ is primitive, then $n=p^a$ for some prime $p$ and $a\ge 1$. Furthermore, 
	a prime power $p^a$ ($a\ge 1$) is primitive if and only if $t_A(p^a)=a$. It turns out that
	\begin{equation*}
		\mu_A(p^a)=\begin{cases} -1, & \text{ if $p^a$ is primitive},\\ 0, & \text{ otherwise}. 
		\end{cases}    
	\end{equation*}
	
	Examples of regular systems of divisors are $A = D$, where $D(n)$ is the set
	of all positive divisors of $n$ and $A = U$, where $U(n)$ is the set of unitary divisors of $n$.
	For every prime power $p^a$ ($a\ge 1$) one has $t_D(p^a) = 1$, $D(p^a)=\{1,p,p^2,\ldots,p^a\}$, the primitive integers with respect to $D$ are the primes, and $t_U(p^a) = a$, $U(p^a)=\{1,p^a\}$, the primitive integers with respect to $U$ being the prime powers $p^a$ ($a\ge 1$). Here $*_D$ and $*_U$ are the Dirichlet convolution and the unitary convolution, respectively. 
	
	To give another example of a regular system of divisors let, for every prime $p$ and $a\in \N$, $t_A(p^a)=2$ if 
	$a$ is even, and $t_A(p^a)=a$ if $a$ is odd.
	That is, $A(p^a)=\{1,p^2,p^4,\ldots,p^a\}$ if $a$ is even, and $A(p^a)=\{1,p^a\}$ if $a$ is odd.
	
	Let $A=(A(n))_{n\in\N}$ be a regular system of divisors and $k\in \N$. Define $A_k=(A_k(n))_{n\in \N}$, where 
	$A_k(n)=\{d\in \N: d^k\in A(n^k)\}$. Then the system $A_k$ is also regular, see Sita Ramaiah \cite[Th.\ 3.1]{Sit1978}.
	Let $(j,n)_{A,k}= \max \{d^k: d^k\mid j, d^k\in A(n)\}$. Then  
	\begin{equation} \label{Phi_k_gen}
		\Phi_{A,k,S,n}(x) = \prod_{\substack{1\le j\le n^k \\ ((j,n^k)_{A,k})^{1/k} \in S}} \left(x- e^{2\pi ij/n^k} \right)
	\end{equation}
	is the polynomial corresponding to the generalized Ramanujan sums, investigated in the literature. See Haukkanen 
	\cite[Sect.\ 5]{Hau1988}. We will not consider \eqref{Phi_k_gen} in what follows.
	
	For properties of regular convolutions and related arithmetical functions we refer to 
	Narkiewicz \cite{Nar1963}, McCarthy \cite{McC1968,McC1986}, Sita Ramaiah \cite{Sit1978}.
	
	\subsection{Generalized arithmetic functions} \label{Sect_func}
	
	If $A$ is a regular system of divisors, then the corresponding generalized Euler function $\varphi_A(n)$ 
	is defined as  
	\begin{equation*} 
		\varphi_A(n) = \sum_{\substack{j=1 \\ (j,n)_A=1}}^n 1,
	\end{equation*}
	where $(j,n)_A$ is  given by \eqref{(j,n)_A}. 
	In the proofs we will use the property that 
	\begin{align} \label{prop_gcd_A}
		d\in A((j,n)_A) \text{ holds if and only if } d\mid j \text{ and } d\in A(n).
	\end{align}
	
	We note that 
	\begin{equation} \label{varphi_A}
		\varphi_A(n) = \sum_{d\in A(n)} d \mu_A(n/d) = n \prod_{p^a\mid \mid n} \left(1-\frac1{p^t}\right),
	\end{equation}
	where $t=t_A(p^a)$ is the type of $p^a$. Hence $\varphi_A$ is multiplicative. 
	Here $\varphi_D(n)=\varphi(n)$ is the classical Euler function, and 
	$\varphi_U(n)=\varphi^*(n)$ is its unitary analogue. 
	
	Now let $S\subseteq \N$ be an arbitrary (nonempty) set and let $\varrho_S$ be the characteristic function of $S$. Given a regular system of divisors $A$, define the M\"obius-type function $\mu_{A,S}$ by 
	\begin{equation} \label{varrho_S}
		\sum_{d\in A(n)} \mu_{A,S}(d)= \varrho_S(n) \quad (n\in \N),
	\end{equation}
	that is,
	\begin{equation} \label{mu_A_S}
		\mu_{A,S}(n)= \sum_{d\in A(n)} \mu_A(d) \varrho_S(n/d).
	\end{equation}
	
	We say that $S$ is multiplicative if its characteristic function $\varrho_S$ is not identical zero and 
	multiplicative (hence $1\in S$). In this case $\mu_{A,S}$ is multiplicative and $\mu_{A,S}(p^a)=\varrho(p^a)-\varrho(p^{a-1}) 
	\in \{-1,0,1\}$ for every prime power $p^a$ ($a\ge 1$). If $S$ is not multiplicative, then $\mu_{A,S}$ can be unbounded.
	For example, if $S=\P$ is the set of primes and $A=D$, then $\mu_{D,\P}(p_1\cdots p_k)=(-1)^{k-1}k$ for distinct primes
	$p_1,\ldots,p_k$.
	
	The Euler-type function $\varphi_{A,S}(n)$ is defined by 
	\begin{equation*}
		\varphi_{A,S}(n) =  \sum_{\substack{j=1 \\ (j,n)_A\in S}}^n 1,
	\end{equation*}
	which reduces to $\varphi_A(n)$ if $S=\{1\}$. More generally, the corresponding Ramanujan sums 
	$c_{A,S,n}(k)$ are given by
	\begin{equation*}
		c_{A,S,n}(k) = \sum_{\substack{j=1\\ (j,n)_A\in S}}^n \zeta_n^{jk},
	\end{equation*}
	and they share properties similar to the classical Ramanujan sums $c_n(k)$. For example, for every regular $A$, every subset $S$ and $k,n\in \N$,
	\begin{equation} \label{c_A_S_id}
		c_{A,S,n}(k)= \sum_{d\mid (k,n)_A} d\mu_{A,S}(n/d)
	\end{equation}
	holds, therefore all values of $c_{A,S,n}(k)$ are real integers. In particular, $c_{A,S,n}(1) =\mu_{A,S}(n)$, and
	\begin{equation} \label{form_Euler_A_S}
		c_{A,S,n}(0)  =\varphi_{A,S}(n) = \sum_{d\in A(n)} d \mu_{A,S}(n/d).
	\end{equation}
	
	If $S$ is multiplicative, then $c_{A,S,n}(k)$ and $\varphi_{A,S}(n)$ are multiplicative in $n$.
	If $S=\{1\}$, then one has the H\"older-type identity
	\begin{equation} \label{Holder}
		c_{A,n}(k) := c_{A,\{1\},n}(k)= \frac{\varphi_A(n)\mu_A(n/(k,n)_A)}{\varphi_A(n/(k,n)_A)},
	\end{equation}
	see McCarthy \cite[p.\ 170]{McC1986}.
	
	In the case $A=D$ these functions were introduced by Cohen \cite{Coh1959}. Also see the author \cite{Tot2004}. For arbitrary $A$ and $S$ see Haukkanen \cite{Hau1988}, the author and Haukkanen \cite{TotHau1996, TotHau2000}.
	
	\section{Results} \label{Section_Results}
	
	\subsection{Basic properties}
	
	Consider the cyclotomic polynomials $\Phi_{A,S,n}(x)$ defined by \eqref{Phi_A_S}.
	It follows from the definition that these are monic polynomials, and the degree of $\Phi_{A,S,n}(x)$ is 
	$\varphi_{A,S}(n)$.
	
	\begin{theorem} \label{Th_A_S} For every regular system $A$, every subset $S$
		and $n\in \N$ we have 
		\begin{align}
			\Phi_{A,S,n}(x) & = \prod_{d\in A(n)} \left(x^d-1 \right)^{\mu_{A,S}(n/d)}  \label{Phi_gen_1} \\ 
			& = (-1)^{\varrho_S(n)} \prod_{d\in A(n)} \left(1-x^d\right)^{\mu_{A,S}(n/d)}   \label{Phi_gen_1_bis} \\ 
			& = \prod_{\substack{d\in A(n)\\ n/d\in S}} \Phi_{A,d}(x). \label{Phi_gen_2}  
		\end{align}
		
		Furthermore,
		\begin{align}                
			\prod_{d\in A(n)} \Phi_{A,S,d}(x) & = \prod_{\substack{d\in A(n)\\n/d\in S}} (x^d-1), \label{Phi_gen_3} \\
			\prod_{d\in A(n)} \Phi_{A,d}(x) & = x^n-1. \label{Phi_gen_4}
		\end{align}
	\end{theorem}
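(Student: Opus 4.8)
The plan is to turn each identity into a computation of the order of vanishing (the valuation) of a product of factors $x^c-1$ at the individual roots $\zeta_n^j$, after which the stated formulas drop out from the defining relations \eqref{varrho_S} and \eqref{mu_A_S} of $\mu_{A,S}$. Throughout I would use three structural features of a regular system $A$, all immediate from the prime-power description (ii) together with multiplicativity (i): (P1) complementation closure, $d\in A(n)\Rightarrow n/d\in A(n)$; (P2) transitivity, $c\in A(d)$ and $d\in A(n)\Rightarrow c\in A(n)$; and (P3) for fixed $e\in A(n)$ the map $f\mapsto ef$ is a bijection of $A(n/e)$ onto $\{d\in A(n): e\in A(d)\}$. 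On a prime power $p^a$ the set $A(p^a)=\{1,p^t,\dots,p^a\}$ consists of the powers whose exponents form an arithmetic progression with step $t=t_A(p^a)$, which makes (P1)--(P3) transparent; the general case follows by multiplicativity.

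For \eqref{Phi_gen_1} and \eqref{Phi_gen_1_bis} I would first reindex by $e=n/d$ (legitimate by (P1)) to write $\prod_{d\in A(n)}(x^d-1)^{\mu_{A,S}(n/d)}=\prod_{e\in A(n)}(x^{n/e}-1)^{\mu_{A,S}(e)}$. Since $\zeta_n^e=\zeta_{n/e}$, the factor $x^{n/e}-1$ equals $\prod_{j:\,e\mid j}(x-\zeta_n^j)$ over $j\in\{1,\dots,n\}$, so the valuation of the whole product at $\zeta_n^j$ is $\sum_{e\in A(n),\,e\mid j}\mu_{A,S}(e)$. By \eqref{prop_gcd_A} this range is exactly $e\in A((j,n)_A)$, whence the sum collapses to $\varrho_S((j,n)_A)\in\{0,1\}$ by \eqref{varrho_S}; thus the product has a simple zero at $\zeta_n^j$ precisely when $(j,n)_A\in S$, giving \eqref{Phi_gen_1}. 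Extracting a sign $-1$ from each factor and using $\sum_{e\in A(n)}\mu_{A,S}(e)=\varrho_S(n)$ (again \eqref{varrho_S}) produces the signed form \eqref{Phi_gen_1_bis}. For \eqref{Phi_gen_2} I would insert the $S=\{1\}$ instance of \eqref{Phi_gen_1}, namely $\Phi_{A,d}(x)=\prod_{c\in A(d)}(x^c-1)^{\mu_A(d/c)}$ (here $\mu_{A,\{1\}}=\mu_A$ by \eqref{mu_A_S}), into the right-hand side and collect powers of $x^c-1$; using (P3) to write $d=cf$ with $f\in A(n/c)$, the exponent of $x^c-1$ becomes $\sum_{f\in A(n/c)}\mu_A(f)\varrho_S((n/c)/f)=\mu_{A,S}(n/c)$ by \eqref{mu_A_S}, so the right-hand side equals $\prod_{c\in A(n)}(x^c-1)^{\mu_{A,S}(n/c)}$, which is $\Phi_{A,S,n}(x)$ by \eqref{Phi_gen_1}.

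For \eqref{Phi_gen_3} I would substitute \eqref{Phi_gen_1} for each $\Phi_{A,S,d}(x)$ on the left, obtaining $\prod_{d\in A(n)}\prod_{c\in A(d)}(x^c-1)^{\mu_{A,S}(d/c)}$, where by (P2) every $c$ that occurs lies in $A(n)$. Collecting, the exponent of $x^c-1$ is $\sum_{d\in A(n),\,c\in A(d)}\mu_{A,S}(d/c)=\sum_{f\in A(n/c)}\mu_{A,S}(f)=\varrho_S(n/c)$ by (P3) and \eqref{varrho_S}, so the left side reduces to $\prod_{c\in A(n),\,n/c\in S}(x^c-1)$, which is \eqref{Phi_gen_3}. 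Finally \eqref{Phi_gen_4} is the case $S=\{1\}$ of \eqref{Phi_gen_3}: since $n/c\in\{1\}$ forces $c=n$ and $n\in A(n)$, the right side collapses to the single factor $x^n-1$, while the left side is $\prod_{d\in A(n)}\Phi_{A,d}(x)$ by \eqref{Phi_A}. I expect the only genuine obstacle to be the careful verification of (P1)--(P3), and especially the reindexing bijection (P3), which is exactly what makes the M\"obius cancellations succeed; once these are in hand every identity is a one-line valuation count followed by an appeal to \eqref{varrho_S} or \eqref{mu_A_S}.
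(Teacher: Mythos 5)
Your proposal is correct. For \eqref{Phi_gen_1} and \eqref{Phi_gen_1_bis} it is essentially the paper's argument in multiplicative disguise: the paper first proves a generalized Hurwitz lemma, $\sum_{(j,n)_A\in S}f(j/n)=\sum_{d\in A(n)}\mu_{A,S}(d)F_f(n/d)$, by exactly the interchange you perform (insert \eqref{varrho_S}, swap the sums via \eqref{prop_gcd_A}), and then specializes $f$ to a formal logarithm; your per-root valuation count is the same double counting, read off at each $\zeta_n^j$ instead of summed with logarithmic weights. (One presentational nit: since $\mu_{A,S}$ may take negative values, you should add a line saying that a product of monic factors whose zero--pole divisor works out to $\sum_{j}\varrho_S((j,n)_A)\,[\zeta_n^j]$ is therefore the monic polynomial $\Phi_{A,S,n}(x)$.) The genuine divergence is in \eqref{Phi_gen_2}--\eqref{Phi_gen_4}: the paper stays inside the regular-convolution ring, writing $\log\Phi_{A,S,\cdot}(x)=\log(x^{\cdot}-1)*_A\mu_A*_A\varrho_S$, and gets \eqref{Phi_gen_2} by comparing with the $S=\{1\}$ case and \eqref{Phi_gen_3} by convolving with $\1$ --- everything rides on commutativity and associativity of $*_A$, which is regularity condition (a) and is imported for free. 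You instead collect exponents of $x^c-1$ by hand, which obliges you to prove the structural facts (P1)--(P3); your (P3) is precisely the combinatorial content of associativity of $*_A$ in the instances needed, so you re-derive locally what the paper invokes globally. Your verification is sound: with condition (ii) as formulated in this paper (the full chain $A(p^{it})=\{1,p^t,\dots,p^{it}\}$ for all $i\le a/t$), the type is constant along the chain, so (P1)--(P3) really are immediate on prime powers and extend componentwise by (i); be aware that in other formulations this type-inheritance is a separate lemma (McCarthy's Corollary 4.2, which the paper itself cites in the proof of Theorem \ref{Th_Menon_gen}). The trade-off: your route is self-contained and makes the M\"obius cancellations visible without presupposing the ring structure of regular convolutions, while the paper's is shorter and its logarithmic identity \eqref{id_first} is reused immediately afterwards --- the proof of Corollary \ref{Cor_x_n_1} inverts $\mu_{A,S}$ in that same ring.
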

	
	If $A=D$ or $A=U$, respectively $S=\{1\}$, $S=\{m^2: m\in \N\}$ or $S=\N \setminus \{1\}$, then we recover properties of the classical cyclotomic polynomials and its analogues mentioned in the Introduction.
	
	\begin{corollary} \label{Cor_pal} For every $A$, $S$ and $n\in \N$,
		\begin{equation*}
			\Phi_{A,S,n}(x)= (-1)^{\varrho_S(n)} x^{\varphi_{A,S}(n)} \Phi_{A,S,n}(1/x),   
		\end{equation*}
		hence the polynomial $\Phi_{A,S,n}(x)$ is palindromic or antipalindromic, according to $n\in S$ or $n\notin S$.
	\end{corollary}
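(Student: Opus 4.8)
The plan is to derive the palindromicity directly from the product formula \eqref{Phi_gen_1} of Theorem \ref{Th_A_S}, which I may assume. First I would start from
\[
\Phi_{A,S,n}(x) = \prod_{d\in A(n)} \left(x^d-1\right)^{\mu_{A,S}(n/d)},
\]
and substitute $1/x$ in place of $x$. The key algebraic observation is that for each divisor $d$,
\[
(1/x)^d - 1 = \frac{1-x^d}{x^d} = \frac{-(x^d-1)}{x^d},
\]
so raising to the power $\mu_{A,S}(n/d)$ and taking the product over $d\in A(n)$ separates into three factors: the original polynomial $\Phi_{A,S,n}(x)$, a power of $(-1)$, and a power of $x$.

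The main step is to evaluate the two bookkeeping exponents that emerge. The exponent of $x$ is $-\sum_{d\in A(n)} d\,\mu_{A,S}(n/d)$, which by \eqref{form_Euler_A_S} equals $-\varphi_{A,S}(n)$; after multiplying through by $x^{\varphi_{A,S}(n)}$ this accounts for the stated prefactor $x^{\varphi_{A,S}(n)}$. The exponent of $(-1)$ is $\sum_{d\in A(n)} \mu_{A,S}(n/d)$, and by the defining relation \eqref{varrho_S}, namely $\sum_{d\in A(n)} \mu_{A,S}(d) = \varrho_S(n)$ (reindexing $d\mapsto n/d$ over $A(n)$, which is legitimate since $A(n)$ is closed under this involution), this sum equals $\varrho_S(n)$. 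Hence the sign factor is $(-1)^{\varrho_S(n)}$, matching the claimed formula.

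Assembling these pieces gives exactly
\[
x^{\varphi_{A,S}(n)} \Phi_{A,S,n}(1/x) = (-1)^{\varrho_S(n)} \Phi_{A,S,n}(x),
\]
which rearranges to the asserted identity. The final sentence follows on noting that $\varrho_S(n)\in\{0,1\}$: when $n\in S$ we get $(-1)^{\varrho_S(n)}=1$, so $\Phi_{A,S,n}$ is palindromic, and when $n\notin S$ we get $-1$, so it is antipalindromic. I expect the only point requiring a little care to be the reindexing $d\mapsto n/d$ used to match $\sum_{d\in A(n)}\mu_{A,S}(n/d)$ with the left-hand side of \eqref{varrho_S}; this is valid because every regular system $A$ satisfies $d\in A(n)\iff n/d\in A(n)$, which I would cite from the structure described in Section \ref{Sect_regular}.
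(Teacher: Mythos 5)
Your derivation of the displayed identity is correct and is essentially the paper's own proof: the paper dispatches this corollary by citing \eqref{Phi_gen_1}, \eqref{Phi_gen_1_bis} and \eqref{form_Euler_A_S}, and your computation simply unwinds \eqref{Phi_gen_1_bis} by hand --- the sign bookkeeping $\sum_{d\in A(n)}\mu_{A,S}(n/d)=\varrho_S(n)$, justified by the involution $d\in A(n)\Leftrightarrow n/d\in A(n)$ of regular systems, is exactly what underlies \eqref{Phi_gen_1_bis} in the proof of Theorem \ref{Th_A_S}, and the exponent of $x$ is handled via \eqref{form_Euler_A_S} just as the paper intends.

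However, your last sentence mis-evaluates the sign. Since $\varrho_S$ is the characteristic function of $S$, $n\in S$ gives $\varrho_S(n)=1$, hence $(-1)^{\varrho_S(n)}=-1$, so that $x^{\varphi_{A,S}(n)}\Phi_{A,S,n}(1/x)=-\Phi_{A,S,n}(x)$; by the standard convention this makes $\Phi_{A,S,n}$ \emph{antipalindromic} when $n\in S$, and palindromic when $n\notin S$ --- the opposite of what you wrote. Sanity checks: for $A=D$, $S=\{1\}$ and $n=1\in S$ one has $\Phi_1(x)=x-1$, which is antipalindromic, while for $n\ge 2$ (so $n\notin S$) the classical $\Phi_n(x)$ is palindromic; likewise the inverse cyclotomic polynomial $\Psi_6(x)=x^4+x^3-x-1$ (here $S=\N\setminus\{1\}$ and $6\in S$) is antipalindromic. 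Be aware that the corollary's verbal gloss in the paper attaches the two cases in the same reversed order as you do, so your slip happens to reproduce the paper's wording; but the displayed identity, which both you and the paper prove correctly, forces the attribution ``palindromic if $n\notin S$, antipalindromic if $n\in S$.'' The fix to your write-up is one line: replace ``when $n\in S$ we get $(-1)^{\varrho_S(n)}=1$'' by the correct evaluation $(-1)^{\varrho_S(n)}=-1$ and swap the two conclusions accordingly.
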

	
	One may wonder what will be the generalized identity corresponding to \eqref{x_n_unit} and \eqref{x_n_square}. 
	The answer is included in the next corollary.
	
	\begin{corollary} \label{Cor_x_n_1} Assume that $1\in S$. Then we have 
		\begin{equation} \label{id_x_n_1}
			x^n-1 = \prod_{d\in A(n)} \Phi_{A,S,d}(x)^{h_{A,S}(n/d)},
		\end{equation}
		where $h_{A,S}$ is the inverse with respect to $A$-convolution of the function $\mu_{A,S}$.
	\end{corollary}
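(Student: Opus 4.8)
The plan is to work in the multiplicative group $\C(x)^\times$ of nonzero rational functions, where the identity \eqref{id_x_n_1} becomes a purely combinatorial statement about the exponents of the ``basis'' factors $x^e-1$, and to let associativity of $*_A$ do all the work. First I would record the role of the hypothesis $1\in S$. By \eqref{mu_A_S}, $\mu_{A,S}(1)=\sum_{d\in A(1)}\mu_A(d)\varrho_S(1/d)=\mu_A(1)\varrho_S(1)=\varrho_S(1)$, so $1\in S$ gives $\mu_{A,S}(1)=1\neq 0$; hence $\mu_{A,S}$ is a unit in the ring $({\cal A},+,*_A)$ and its $A$-inverse $h_{A,S}$ exists. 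Moreover $h_{A,S}$ is integer-valued, being computed recursively by dividing only by $\mu_{A,S}(1)=1$, so every exponent occurring in \eqref{id_x_n_1} is an integer and the right-hand side is a genuine element of $\C(x)^\times$.

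Next I would substitute the product formula \eqref{Phi_gen_1}, namely $\Phi_{A,S,d}(x)=\prod_{e\in A(d)}(x^e-1)^{\mu_{A,S}(d/e)}$, into the right-hand side of \eqref{id_x_n_1} and collect, for each $e$, the total exponent of the factor $x^e-1$. Here I invoke the transitivity of regular systems (equivalent to the associativity of $*_A$): if $e\in A(d)$ and $d\in A(n)$ then $e\in A(n)$ and $d/e\in A(n/e)$, and the assignment $d=em$ is a bijection between $\{d\in A(n):e\in A(d)\}$ and $A(n/e)$. Consequently only factors with $e\in A(n)$ occur, and for such $e$ the exponent of $x^e-1$ is
\begin{equation*}
\sum_{\substack{d\in A(n)\\ e\in A(d)}} \mu_{A,S}(d/e)\,h_{A,S}(n/d)
= \sum_{m\in A(n/e)} \mu_{A,S}(m)\,h_{A,S}\bigl((n/e)/m\bigr)
= (\mu_{A,S}*_A h_{A,S})(n/e),
\end{equation*}
where the middle equality is exactly the reindexing $d=em$, $n/d=(n/e)/m$, following \eqref{A_convo}.

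Since $h_{A,S}$ is by definition the $A$-inverse of $\mu_{A,S}$, one has $\mu_{A,S}*_A h_{A,S}=\varepsilon$, the $*_A$-identity (with $\varepsilon(1)=1$ and $\varepsilon(m)=0$ for $m>1$). Thus the exponent of $x^e-1$ equals $\varepsilon(n/e)$, which is $1$ for $e=n$ and $0$ otherwise, so the right-hand side of \eqref{id_x_n_1} collapses to the single factor $(x^n-1)^1=x^n-1$, as claimed. I expect the only real obstacle to be the bookkeeping in this collect-and-reindex step, i.e.\ justifying that the double product may be reorganized over the factors $x^e-1$ and that the substitution $d=em$ is precisely the transitivity bijection underlying associativity of $*_A$; once this is in place the conclusion is immediate from the inverse relation. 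Equivalently, for real $x>1$ one may take formal logarithms, write $\log\Phi_{A,S,d}(x)=(\ell *_A\mu_{A,S})(d)$ with $\ell(e)=\log(x^e-1)$, and then compute the logarithm of the right-hand side of \eqref{id_x_n_1} as $(\ell *_A\mu_{A,S})*_A h_{A,S}=\ell *_A(\mu_{A,S}*_A h_{A,S})=\ell *_A\varepsilon=\ell$, which gives $\log(x^n-1)$ and hence the claim after exponentiation.
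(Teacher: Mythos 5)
Your proof is correct and is essentially the paper's own argument: the paper likewise notes that $1\in S$ gives $\mu_{A,S}(1)=1\neq 0$, so the $A$-inverse $h_{A,S}$ exists, and then convolves the relation $\log \Phi_{A,S,\DOT}(x)=\log(x^{\DOT}-1)*_A\mu_{A,S}$ (identity \eqref{id_first}) with $h_{A,S}$ to obtain $\log(x^{\DOT}-1)=\log \Phi_{A,S,\DOT}(x)*_A h_{A,S}$ --- exactly the formal-logarithm version you give in your closing paragraph. Your main body merely unfolds this same computation multiplicatively, with the exponent bookkeeping and the reindexing $d=em$ making explicit the associativity of $*_A$ that the convolution formalism handles implicitly.
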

	
	Here the values of $h_{A,S}$ 
	can be computed for special choices of $A$ and $S$. To give another example, let $A=U$ and $S=\{m^2: m\in \N\}$. Then it
	turns out that $h_{U,S}$ is the characteristic function of the exponentially odd integers, i.e., 
	integers with all exponents odd in their prime power factorization. Therefore, considering the unitary analogue of 
	$Q_n(x)$, given by \eqref{def_Q_n}, namely
	\begin{equation*} 
		Q_n^*(x) = \prod_{\substack{j=1\\ (j,n)_*\text { a square}}}^n \left(x- \zeta_n^j \right),
	\end{equation*}
	we have 
	\begin{equation*}
		x^n-1 = \prod_{\substack{d\mid \mid n\\ n/d \text{ exponentially odd}}} Q_d^*(x).
	\end{equation*}
	
	For a regular system $A=(A(n))_{n\in \N}$ of divisors define the $A$-kernel function $\kappa_A$ by 
	$\kappa_A(n) = \prod_{p^a \mid \mid n}p^t$, where $t = t_A(p^a)$ is the type of $p^a$. Here $\kappa_D(n)=\kappa(n)=\prod_{p\mid n} p$ is the square-free kernel of $n$, and $\kappa_U(n)=n$ ($n\in \N$).
	
	\begin{corollary} \label{Cor_kappa} For every regular $A$ and $n\in \N$,
		\begin{equation*}
			\Phi_{A,n}(x)= \Phi_{\kappa_A(n)}(x^{n/\kappa_A(n)}).   
		\end{equation*}
	\end{corollary}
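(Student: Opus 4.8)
The plan is to unwind the definition of $\Phi_{A,n}(x)$ from \eqref{Phi_A} and identify the set of exponents $j$ satisfying $(j,n)_A=1$ with a set governed entirely by the kernel $\kappa_A(n)$. First I would use the explicit structure of a regular system: by property (ii), for each prime power $p^a\mid\mid n$ with type $t=t_A(p^a)$, the only element of $A(p^a)$ that divides $j$ and equals $1$ forces the condition $(j,n)_A=1$ to mean precisely that $p^t\nmid j$ for each such $p$. In other words, $(j,n)_A=1$ if and only if $(j,\kappa_A(n))=1$ in the ordinary gcd sense, since $\kappa_A(n)=\prod_{p^a\mid\mid n}p^t$ collects exactly these thresholds. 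This reduces the coprimality-type condition for $A$ at modulus $n$ to ordinary coprimality at modulus $\kappa_A(n)$.

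Next I would set $m=\kappa_A(n)$ and $r=n/\kappa_A(n)$, so that $n=mr$, and examine how the roots $\zeta_n^j$ behave. Writing $\zeta_n^j=e^{2\pi i j/n}$, the condition $(j,\kappa_A(n))=1$ partitions $j\in\{1,\ldots,n\}$ into residue classes modulo $m$ that are coprime to $m$, with each such class repeated $r$ times as $j$ ranges over the full interval. The key computational step is to show that as $j$ runs over all integers in $[1,n]$ with $(j,m)=1$, the values $\zeta_n^j=e^{2\pi i j/(mr)}$ are exactly the $r$-th roots of the primitive $m$-th roots of unity. Concretely, I expect $\zeta_n^j$ to run precisely over the set $\{\eta : \eta^r=\zeta_m^k,\ (k,m)=1\}$, each occurring once, which is the root set of $\Phi_m(x^r)=\Phi_{\kappa_A(n)}(x^{n/\kappa_A(n)})$.

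To make this rigorous I would verify two things: a degree count and a matching of root sets. For the degree count, the left side has degree $\varphi_A(n)$; using \eqref{varphi_A} one has $\varphi_A(n)=n\prod_{p^a\mid\mid n}(1-p^{-t})=r\,\varphi(m)$ since $\varphi(\kappa_A(n))=\kappa_A(n)\prod(1-p^{-t})$ and $n=r\kappa_A(n)$, while the right side $\Phi_m(x^r)$ has degree $r\,\varphi(m)$, so the degrees agree. For the root sets, I would check that every root of $\Phi_{A,n}(x)$ is a root of $\Phi_m(x^r)$ and conversely, using that $\zeta_n^j$ is a primitive $m$-th root of unity raised appropriately when $(j,m)=1$. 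Both polynomials are monic, so equality of root sets with multiplicities yields the claim.

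The main obstacle I anticipate is the clean reduction of the condition $(j,n)_A=1$ to $(j,\kappa_A(n))=1$; this requires careful use of the multiplicativity in property (i) together with the explicit description of $A(p^a)$ in (ii), since $(j,n)_A$ is defined as a maximum over divisors in $A(n)$ rather than an ordinary gcd. Once that equivalence is established, the remaining steps are a routine bijection between root sets and a degree verification.
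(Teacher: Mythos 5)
Your plan contains two concrete false steps, and as a result it targets a reading of the identity that cannot hold. The central reduction fails: you correctly observe that $(j,n)_A=1$ is equivalent to $p^{t}\nmid j$ for every $p^a\mid\mid n$ with $t=t_A(p^a)$, but your next sentence, asserting that this is the same as $(j,\kappa_A(n))=1$ in the ordinary sense, is wrong whenever some type $t>1$: ordinary coprimality to $\kappa_A(n)=\prod p^{t}$ demands $p\nmid j$, a strictly stronger condition. Take $A=U$ and $n=p^2$, so $t=2$ and $\kappa_U(p^2)=p^2$; then $(p,p^2)_U=1$ since $U(p^2)=\{1,p^2\}$, while $(p,p^2)=p>1$. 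The same example breaks your degree count and the identity in the form you prove it: $\deg \Phi_{U,p^2}=\varphi_U(p^2)=p^2-1$, whereas $\deg \Phi_{p^2}(x^{n/\kappa_U(n)})=\deg \Phi_{p^2}(x)=p^2-p$, and indeed $\Phi^*_{p^2}(x)=(x^{p^2}-1)/(x-1)=\Phi_p(x)\Phi_{p^2}(x)\ne\Phi_{p^2}(x)$. The slip in the degree computation is your claim $\varphi(\kappa_A(n))=\kappa_A(n)\prod(1-p^{-t})$; by \eqref{varphi_A} that product formula computes $\varphi_A(\kappa_A(n))$, not the classical $\varphi(\kappa_A(n))=\kappa_A(n)\prod(1-p^{-1})$. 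The correct relation is $\varphi_A(n)=\frac{n}{\kappa_A(n)}\varphi_A(\kappa_A(n))$, and correspondingly the right-hand side of the corollary must retain the subscript $A$: what the paper's proof actually establishes is $\Phi_{A,n}(x)=\Phi_{A,\kappa_A(n)}(x^{n/\kappa_A(n)})$ (the displayed statement drops the $A$, visibly a misprint, since the printed form fails already for $A=U$, $n=p^2$). Your argument, where it is correct, proves only the case $A=D$, i.e.\ the classical identity $\Phi_n(x)=\Phi_{\kappa(n)}(x^{n/\kappa(n)})$.

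For comparison, the paper's proof is one formal line from \eqref{Phi_gen_1}: since $\mu_A(d)=0$ unless $d$ is a product of $A$-primitive integers, the product $\prod_{d\in A(n)}(x^{n/d}-1)^{\mu_A(d)}$ collapses to $d\in A(\kappa_A(n))$; every such $d$ divides $\kappa_A(n)$, so $x^{n/d}=\bigl(x^{n/\kappa_A(n)}\bigr)^{\kappa_A(n)/d}$, and one reads off $\Phi_{A,\kappa_A(n)}$ evaluated at $x^{n/\kappa_A(n)}$. Your root-matching strategy can be repaired to give this corrected statement: with $m=\kappa_A(n)$ and $r=n/m$, one has $p^{t}\mid j$ if and only if $p^{t}\mid (j\bmod m)$ because $p^{t}\mid m$, so $(j,n)_A=1$ depends only on the residue class $s$ of $j$ modulo $m$ and is equivalent to $(s,m)_A=1$ (with the $A$-gcd, not the ordinary gcd); each admissible class contributes exactly the $r$ solutions of $y^{r}=\zeta_m^{s}$, whence $\Phi_{A,n}(x)=\prod_{(s,m)_A=1}(x^{r}-\zeta_m^{s})=\Phi_{A,m}(x^{r})$. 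But as written, both pillars of your plan --- the coprimality reduction and the degree verification --- are false, so the proposal does not prove the corollary.
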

	
	For a regular system
	$A=(A(n))_{n\in \N}$ of divisors the $A$-core function $\gamma_A$ is defined by 
	$\gamma_A(n) = n\kappa(n)/\kappa_A(n)=\prod_{p^a \mid \mid n}p^{a-t+1}$, where $t = t_A(p^a)$ is the type of $p^a$. See McCarthy \cite[p.\ 166]{McC1986}. 
	Note that $\gamma_D(n)=n$ ($n\in \N$) and $\gamma_U(n)=\kappa(n)=\prod_{p\mid n} p$.
	
	The following result is a generalization of identity \eqref{Phi_star_prod}.
	
	\begin{theorem} \label{Th_A} For every regular system $A$ of divisors and every $n\in \N$,
		\begin{equation}  \label{Phi_A_irred}
			\Phi_{A,n}(x)= \prod_{\substack{d\mid n\\ \gamma_A(n)\mid d}} \Phi_d(x).
		\end{equation}
	\end{theorem}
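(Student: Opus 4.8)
\emph{Plan.} The strategy is to compare the two sides root by root. Both $\Phi_{A,n}(x)$ and the right-hand side $\prod_{d\mid n,\ \gamma_A(n)\mid d}\Phi_d(x)$ are monic, and both are squarefree: the former because the numbers $\zeta_n^j$ ($1\le j\le n$) are pairwise distinct, the latter because the factors $\Phi_d$ are squarefree and pairwise coprime. Hence it suffices to prove that the two polynomials have the same set of roots. The roots of $\Phi_{A,n}(x)$ are exactly the $\zeta_n^j$ with $(j,n)_A=1$, while the roots of the right-hand side are exactly those $n$-th roots of unity $\zeta_n^j$ whose order $n/(j,n)$ is a multiple of $\gamma_A(n)$ (a primitive $d$-th root of unity with $d\mid n$ being of the form $\zeta_n^j$ with $n/(j,n)=d$). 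Since, by \eqref{(j,n)_A}, every $d\in A(n)$ with $d\mid j$ automatically divides $(j,n)$, the quantity $(j,n)_A=\max\{d\in A(n):d\mid(j,n)\}$ depends only on $g:=(j,n)$. Writing $g$ for a divisor of $n$, the theorem therefore reduces to the equivalence
\begin{equation*}
(g,n)_A=1 \quad\Longleftrightarrow\quad \gamma_A(n)\mid n/g \qquad (g\mid n).
\end{equation*}

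Next I would reduce this equivalence to prime powers. By the regularity of $A$, property (i) gives $A(n)=\{\prod_p e_p: e_p\in A(p^{a_p})\}$ when $n=\prod_p p^{a_p}$; writing $g=\prod_p p^{b_p}$ with $0\le b_p\le a_p$, this yields the factorization $(g,n)_A=\prod_p (p^{b_p},p^{a_p})_A$. Likewise $\gamma_A$ is multiplicative and $n/g=\prod_p p^{a_p-b_p}$, so each side of the asserted equivalence splits as a product over the primes $p\mid n$. It thus suffices to check the equivalence for each prime power $p^a$ separately, with $g$ replaced by $p^b$ and $\gamma_A(n)$ by $\gamma_A(p^a)=p^{a-t+1}$, where $t=t_A(p^a)$ is the type of $p^a$.

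Finally I would carry out the prime-power computation. Using the explicit description $A(p^a)=\{1,p^t,p^{2t},\dots,p^a\}$ from property (ii), the divisors of $p^b$ lying in $A(p^a)$ are the powers $p^{it}$ with $it\le b$, so $(p^b,p^a)_A=p^{\lfloor b/t\rfloor t}$, and this equals $1$ precisely when $b<t$. On the other hand $\gamma_A(p^a)=p^{a-t+1}$ divides $p^{a-b}$ precisely when $a-t+1\le a-b$, i.e.\ again when $b<t$. Thus both conditions are equivalent to $b<t$ for every prime $p\mid n$, which establishes the displayed equivalence and hence the theorem. I expect the only real obstacle to be the setup itself — translating the factorization into the root-set comparison and organizing the multiplicative bookkeeping; once these are in place, the prime-power computation is immediate, and a degree check (each local factor contributing $p^a-p^{a-t}=\varphi_A(p^a)$, matching $\deg\Phi_{A,n}=\varphi_A(n)$) serves as a useful consistency test.
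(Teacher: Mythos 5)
Your proposal is correct, but it takes a genuinely different route from the paper's. The paper obtains \eqref{Phi_A_irred} as a special case of the general convolution identity of Theorem \ref{Th_general}: taking (formally) $g(n)=\log \Phi_n(x)$ and $g_A(n)=\log \Phi_{A,n}(x)$, both of which sum to $\log(x^n-1)$ over $d\mid n$ and $d\in A(n)$ respectively by \eqref{Phi_gen_4}, the theorem drops out at once; Theorem \ref{Th_general} is in turn proved by $\mu_A$-inversion together with an evaluation of $\sum_{m\mid n/\delta,\, m\in A(n)}\mu_A(m)$, which equals $1$ or $0$ according as $\gamma_A(n)\mid \delta$ or not. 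You instead compare the two sides root by root: both are monic and squarefree, the root-set comparison reduces (via your correct observation that $(j,n)_A$ depends only on $g=(j,n)$) to the equivalence $(g,n)_A=1 \iff \gamma_A(n)\mid n/g$, and property (i) of regularity localizes this at prime powers, where $(p^b,p^a)_A=p^{\lfloor b/t\rfloor t}$ and the divisibility $p^{a-t+1}\mid p^{a-b}$ are each equivalent to $b<t$. Notably, the same inequality ($a_i-b_i<t_i$ for all $i$, i.e.\ $\gamma_A(n)\mid\delta$) is the combinatorial heart of the paper's proof of Theorem \ref{Th_general}, so the two arguments bottom out in essentially the same prime-power computation and differ mainly in packaging. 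The paper's route buys reusability: the same lemma simultaneously yields \eqref{id_Ramanujan_gamma} and the companion identities for $\varphi_A$ and $\mu_A$, and feeds directly into the proof of Theorem \ref{Th_coonect_Ramanujan_sum}. Your route buys self-containedness and transparency: it avoids formal logarithms and the convolution formalism entirely, and it isolates the clean structural fact that an $n$-th root of unity is a root of $\Phi_{A,n}(x)$ precisely when its order is divisible by $\gamma_A(n)$; your closing degree check against $\varphi_A(p^a)=p^a-p^{a-t}$ is a sound consistency test, though not needed for the argument.
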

	
	As a corollary we deduce the following more general identity.
	
	\begin{corollary} \label{Cor_gen_irred} For every regular $A$, every subset $S$ and $n\in \N$ we have
		\begin{equation*} 
			\Phi_{A,S,n}(x)= \prod_{\substack{d\in A(n)\\ n/d\in S}} \prod_{\substack{e\mid d\\ \gamma_A(d)\mid e}} \Phi_e(x),
		\end{equation*}
		therefore all the polynomials $\Phi_{A,S,n}(x)$ have integer coefficients.
	\end{corollary}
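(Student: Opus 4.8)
The plan is to obtain the formula by stacking the two factorizations that have already been proved, with essentially no new computation. First I would invoke identity \eqref{Phi_gen_2} of Theorem~\ref{Th_A_S}, which expresses the generalized polynomial as a product of the ``$S=\{1\}$'' polynomials $\Phi_{A,d}$:
\[
\Phi_{A,S,n}(x)= \prod_{\substack{d\in A(n)\\ n/d\in S}} \Phi_{A,d}(x).
\]
This reduces the task to understanding a single generic factor $\Phi_{A,d}(x)$, and it is precisely here that Theorem~\ref{Th_A} does all the work.

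Next I would apply Theorem~\ref{Th_A}, which is valid for \emph{every} positive integer, taking $d$ in the role of $n$. Identity \eqref{Phi_A_irred} then rewrites each factor as a product of genuine classical cyclotomic polynomials,
\[
\Phi_{A,d}(x)= \prod_{\substack{e\mid d\\ \gamma_A(d)\mid e}} \Phi_e(x),
\]
and substituting this into the previous display, then nesting the two products, produces the claimed expression
\[
\Phi_{A,S,n}(x)= \prod_{\substack{d\in A(n)\\ n/d\in S}} \prod_{\substack{e\mid d\\ \gamma_A(d)\mid e}} \Phi_e(x)
\]
verbatim. For the concluding assertion I would simply recall the classical fact that $\Phi_e(x)\in\Z[x]$ for every $e\in\N$; since $\Z[x]$ is closed under finite products, the right-hand side, and hence $\Phi_{A,S,n}(x)$, has integer coefficients.

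I do not expect any genuine obstacle, as the substantive content has already been absorbed into Theorem~\ref{Th_A}, whose proof is what actually converts the $A$-cyclotomic factors into ordinary $\Phi_e$'s. The only points deserving a moment's care are purely bookkeeping: confirming that Theorem~\ref{Th_A} is legitimately applied to each index $d$ occurring in the outer product (it is, since that result holds for all $n\in\N$), and verifying that the nested index sets match the statement. Accordingly, the write-up should be just the three-line chain of substitutions above followed by the closure-of-$\Z[x]$ remark.
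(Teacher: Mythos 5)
Your proposal is correct and coincides with the paper's own proof, which likewise obtains the formula as a direct consequence of identity \eqref{Phi_gen_2} combined with Theorem~\ref{Th_A} applied to each index $d$, and then concludes integrality from $\Phi_e(x)\in\Z[x]$. No gaps and no substantive difference in approach.
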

	
	Theorem \ref{Th_A} is a special case of the following general result.
	
	\begin{theorem} \label{Th_general} Let $A$ be a regular system of divisors,
		and let the functions $g$ and $g_A$ be defined for every $n\in \N$ by
		\begin{equation*} 
			\sum_{d\mid n} g(d) = \sum_{d\in A(n)} g_A(d).
		\end{equation*}
		Then
		\begin{equation*} 
			g_A(n)= \sum_{\substack{d\mid n\\ \gamma_A(n)\mid d}} g(d).
		\end{equation*}
	\end{theorem}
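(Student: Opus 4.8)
The plan is to invert each of the two defining relations and then match the coefficients of $g$. Write $F(n)=\sum_{d\mid n}g(d)=\sum_{d\in A(n)}g_A(d)$ for the common value. The second expression says that $F$ is the $A$-convolution of $g_A$ with the constant function $1$; since $A$ is regular, that function has $A$-inverse $\mu_A$ by condition (c), so $A$-M\"obius inversion gives
\[
g_A(n)=\sum_{d\in A(n)}\mu_A(n/d)\,F(d)=\sum_{d\in A(n)}\mu_A(n/d)\sum_{e\mid d}g(e).
\]
Interchanging the summations (each $e$ appearing divides some $d\in A(n)$, hence divides $n$) yields $g_A(n)=\sum_{e\mid n}c(e)\,g(e)$, where
\[
c(e)=\sum_{\substack{d\in A(n)\\ e\mid d}}\mu_A(n/d).
\]
It then suffices to show that $c(e)=1$ when $\gamma_A(n)\mid e$ and $c(e)=0$ otherwise.

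Next I would compute $c(e)$ by factoring it over the primes. By condition (i) the elements of $A(n)$ are exactly the products $\prod_p d_p$ with $d_p\in A(p^a)$ for $p^a\mid\mid n$, and $\mu_A$ is multiplicative; hence $c(e)=\prod_{p^a\mid\mid n}c_p(b)$, where $b$ is the exponent of $p$ in $e$ and, with $t=t_A(p^a)$,
\[
c_p(b)=\sum_{\substack{0\le j\le a/t\\ b\le jt}}\mu_A\!\left(p^{\,a-jt}\right),
\]
using $A(p^a)=\{1,p^t,\dots,p^a\}$. Re-indexing by $i=a/t-j$ rewrites this as the sum of $\mu_A(p^{\,it})$ over those $i$ with $0\le i\le a/t$ and $it\le a-b$.

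The crux is the local evaluation. Condition (ii) gives $A(p^{\,it})=\{1,p^t,\dots,p^{\,it}\}$ for each $0\le i\le a/t$, so $t_A(p^{\,it})=t$, and therefore $p^{\,it}$ is $A$-primitive precisely when $i=1$. Since $\mu_A$ vanishes outside $1$ and the $A$-primitive prime powers, only the terms $i=0$ (contributing $\mu_A(1)=1$) and $i=1$ (contributing $\mu_A(p^t)=-1$) can survive, the latter occurring exactly when $t\le a-b$. Hence $c_p(b)=0$ if $b\le a-t$ and $c_p(b)=1$ if $b\ge a-t+1$. As the exponent of $p$ in $\gamma_A(n)=\prod_{p^a\mid\mid n}p^{\,a-t+1}$ is $a-t+1$, the product $\prod_p c_p(b)$ equals $1$ exactly when the exponent of every $p$ in $e$ is at least that in $\gamma_A(n)$, i.e.\ when $\gamma_A(n)\mid e$, and is $0$ otherwise. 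This is the required value of $c(e)$.

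I expect the only real difficulty to be the bookkeeping in the re-indexing together with the verification that $t_A(p^{\,it})=t$ for all admissible $i$; once that is in place the local sum collapses to the two terms $i\in\{0,1\}$ and the identity follows at once.
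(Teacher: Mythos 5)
Your proof is correct and follows essentially the same route as the paper's: $A$-M\"obius inversion, interchange of summations, and a prime-by-prime evaluation of the inner sum $c(e)$ showing it is the indicator of $\gamma_A(n)\mid e$. Your local re-indexing $i=a/t-j$ (using $t_A(p^{it})=t$ from condition (ii)) is just a local version of the paper's global substitution via the symmetry $d\in A(n)\Leftrightarrow n/d\in A(n)$, so the two arguments coincide in substance.
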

	
	Another application of Theorem \ref{Th_general} is to the Ramanujan sums $c_{A,n}(k) = c_{A,\{1\},n}(k)$, namely
	\begin{equation} \label{id_Ramanujan_gamma} 
		c_{A,n}(k)= \sum_{\substack{d\mid n\\ \gamma_A(n)\mid d}} c_d(k),
	\end{equation}
	in particular,
	\begin{align*} 
		\varphi_A(n) & = \sum_{\substack{d\mid n\\ \gamma_A(n)\mid d}} \varphi(d), \\
		\mu_A(n)     & = \sum_{\substack{d\mid n\\ \gamma_A(n)\mid d}} \mu(d).
	\end{align*}
	
	The last three identities are given and proved by McCarthy \cite[Th.\ 2, Cor.\ 2.1, Cor.\ 2.2]
	{McC1968} by different arguments, namely by using properties of finite Fourier representations of $n$-even functions. Also see McCarthy \cite[pp.\ 165--167]{McC1986}. Our proof is direct and short. See Cohen \cite[Lemma\ 3.1]{Coh1961} for a different approach in the case $A=U$.
	
	\subsection{Other identities}
	
	\begin{theorem} \label{Th_repr_cos} Let $A$ be a regular system of divisors and let $n\in \N$. Then for $x>1$ \textup{(or formally)},
		\begin{align} 
			\Phi_{A,n}(x) = \prod_{j=1}^n \left(x^{(j,n)_A}-1\right)^{\cos (2\pi j/n)} \label{id_cos}.
		\end{align}
	\end{theorem}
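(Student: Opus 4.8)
The plan is to take logarithms of both sides, which is legitimate for $x>1$ since every factor $x^d-1$ is positive, and then compare. Applying \eqref{Phi_gen_1} with $S=\{1\}$, for which \eqref{mu_A_S} gives $\mu_{A,\{1\}}=\mu_A$, the left-hand side becomes $\log\Phi_{A,n}(x)=\sum_{d\in A(n)}\mu_A(n/d)\log(x^d-1)$. On the right-hand side I would write $\cos(2\pi j/n)=\RE(\zeta_n^j)$ and group the sum over $j$ according to the value $d:=(j,n)_A$, which always lies in $A(n)$. This turns the right-hand logarithm into $\sum_{d\in A(n)}\RE(S(d))\log(x^d-1)$, where $S(d)=\sum_{1\le j\le n,\,(j,n)_A=d}\zeta_n^j$. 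Thus the whole theorem reduces to the single identity $\RE(S(d))=\mu_A(n/d)$ for every $d\in A(n)$.

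First I would note that $S(d)$ is already real: the involution $j\mapsto n-j$ preserves the condition $(j,n)_A=d$, because every element of $A(n)$ divides $n$ and hence divides $j$ if and only if it divides $n-j$, while it sends $\zeta_n^j$ to $\overline{\zeta_n^j}$. So it suffices to prove $S(d)=\mu_A(n/d)$. To evaluate $S(d)$ I would introduce the auxiliary sums $F(e):=\sum_{1\le j\le n,\,e\in A((j,n)_A)}\zeta_n^j$ for $e\in A(n)$. By property \eqref{prop_gcd_A}, the condition $e\in A((j,n)_A)$ is equivalent to $e\mid j$, so $F(e)=\sum_{e\mid j,\,1\le j\le n}\zeta_n^j=\sum_{m=1}^{n/e}\zeta_{n/e}^{\,m}$, which equals $1$ when $e=n$ and $0$ otherwise. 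On the other hand, sorting the $j$ occurring in $F(e)$ by $g:=(j,n)_A$, and using that $g\in A(n)$ implies $A(g)\subseteq A(n)$, yields $F(e)=\sum_{g\in A(n),\,e\in A(g)}S(g)$.

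Hence the numbers $S(g)$ are determined by the system $\sum_{g\in A(n),\,e\in A(g)}S(g)=[\,e=n\,]$ $(e\in A(n))$, whose incidence matrix is unitriangular for the $A$-divisibility order $e\preceq g\iff e\in A(g)$ on the finite poset $A(n)$, and therefore invertible. It remains to check that $g\mapsto\mu_A(n/g)$ solves the same system, i.e.\ $\sum_{g\in A(n),\,e\in A(g)}\mu_A(n/g)=[\,e=n\,]$. By the multiplicativity property (i) this reduces to a prime power $n=p^a$ of type $t$: with $e=p^{it}$ and $g=p^{jt}$ ($i\le j\le a/t$), and using the consistency $t_A(p^{jt})=t$, the sum telescopes as $\sum_{m=0}^{a/t-i}\mu_A(p^{mt})$, where $\mu_A(p^{mt})$ equals $1,-1,0$ for $m=0,1,\ge 2$; this is $1$ exactly when $i=a/t$, i.e.\ when $e=n$. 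Uniqueness then forces $S(d)=\mu_A(n/d)$, so the two logarithms agree and the theorem follows on exponentiating.

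The main obstacle is precisely this evaluation $S(d)=\mu_A(n/d)$: one cannot simply substitute $j=dm$ and reduce to a Ramanujan sum on $n/d$, because the $A$-gcd does not scale transparently under such a substitution for a general regular system. Inverting the ``upward'' divisor-sum relation for $F$ and reducing the key Möbius identity to prime powers via the structure theorem (i)--(ii) is what makes this step manageable; the comparison of logarithms and the reality of $S(d)$ are then routine.
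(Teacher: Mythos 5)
Your argument is correct, and while its outer shell matches the paper's (take logarithms for $x>1$, so that the theorem becomes the evaluation of an exponential sum attached to $(j,n)_A$), the crux is handled by a genuinely different mechanism. The paper derives \eqref{id_cos} as the instant specialization $k=1$, $f(n)=\log(x^n-1)$ of its general DFT identity (Theorem \ref{Th_zeta_general} via Corollary \ref{Cor_zeta_f_real}), whose proof is a two-line interchange of summation after expanding $f((j,n)_A)=\sum_{e\in A((j,n)_A)}(\mu_A*_Af)(e)$ and invoking \eqref{prop_gcd_A}; the evaluation you isolate, $S(d)=\mu_A(n/d)$, is never stated there separately but is absorbed into that interchange (it is precisely the case $k=1$, $f=\varrho_{\{d\}}$ of \eqref{id_f_exp}, and for $d=1$ it recovers $c_{A,n}(1)=\mu_A(n)$). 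You instead prove the termwise identity directly: you set up the relation $F(e)=\sum_{g\in A(n),\, e\in A(g)}S(g)$ with $F(e)$ equal to $1$ for $e=n$ and $0$ otherwise, observe the system is unitriangular (since $e\in A(g)$ forces $e\mid g$ and $g\in A(g)$), and verify that $g\mapsto\mu_A(n/g)$ solves it by reducing to prime powers through the structure theorem (i)--(ii) and the type-consistency $t_A(p^{jt})=t$ --- the same fact from \cite[Cor.\ 4.2]{McC1986} that the paper uses in its proof of Theorem \ref{Th_Menon_gen}. All the details check out: the grouping is legitimate because $(j,n)_A\in A(n)$, the multiplicative splitting of the verification is justified by axiom (i) and the multiplicativity of $\mu_A$, and $\mu_A(p^{mt})=1,-1,0$ for $m=0,1,\ge 2$ is exactly the characterization of $A$-primitive prime powers. (Your involution argument for the reality of $S(d)$ is sound but redundant, since you subsequently prove $S(d)$ equals the real number $\mu_A(n/d)$.) As for what each route buys: the paper's interchange is shorter and yields the identity for arbitrary $f$ and $k$ simultaneously, hence also the vanishing sine sums and the Ramanujan-sum formula; yours makes explicit the structural reason that $\mu_A$ inverts the constant function $\1$ on the poset $(A(n),\, e\in A(g))$ --- the prime-power computation where type-consistency enters --- at the cost of the uniqueness scaffolding, which could have been bypassed by the same $A$-M\"obius-inversion interchange the paper employs.
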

	
	For $A=D$ identity \eqref{id_cos} was proved by Schramm \cite{Sch2015} and for $A=U$
	by Moree and the author \cite[Th.\ 5]{MorTot2020}.
	In fact, \eqref{id_cos} is a special case of the following general result and its corollary concerning the discrete Fourier
	transform (DFT) of functions involving the quantity $(j,n)_A$.
	
	\begin{theorem} \label{Th_zeta_general} 
		Let $A$ be a regular system of divisors, $f:\N \to \C$ be an arbitrary arithmetic function and $n,k\in \N$. Then
		\begin{equation} \label{id_f_exp}
			\sum_{j=1}^n f((j,n)_A)\zeta_n^{jk} = \sum_{d\in (k,n)_A} d\, (\mu_A*_A f)(n/d).
		\end{equation}
	\end{theorem}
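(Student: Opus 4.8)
The plan is to expand $f$ through the $A$-convolution, interchange the order of summation, and then evaluate a complete geometric sum of roots of unity. First I would introduce $g:=\mu_A*_A f$. Since $\mu_A$ is by definition the $A$-inverse of the constant function $1$, the product $1*_A\mu_A$ is the $A$-convolution identity, so $1*_A g=(1*_A\mu_A)*_A f=f$; unravelling this (and using that $A(m)$ is stable under $e\mapsto m/e$) yields $f(m)=\sum_{e\in A(m)}g(e)$ for every $m\in\N$. Taking $m=(j,n)_A$ and invoking \eqref{prop_gcd_A}, which characterizes $e\in A((j,n)_A)$ as $e\mid j$ together with $e\in A(n)$, this becomes
\[
f((j,n)_A)=\sum_{\substack{e\in A(n)\\ e\mid j}}g(e).
\]

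Substituting into the left-hand side of \eqref{id_f_exp} and interchanging the two summations, grouping the terms by $e\in A(n)$, I obtain
\[
\sum_{j=1}^n f((j,n)_A)\zeta_n^{jk}=\sum_{e\in A(n)}g(e)\sum_{\substack{j=1\\ e\mid j}}^n\zeta_n^{jk}.
\]
The inner sum is the heart of the computation. Writing $j=em$ with $m=1,\dots,n/e$ and noting $\zeta_n^{e}=\zeta_{n/e}$, it equals $\sum_{m=1}^{n/e}\zeta_{n/e}^{mk}$, a complete geometric sum over the $(n/e)$-th roots of unity; hence it is $n/e$ when $(n/e)\mid k$ and $0$ otherwise.

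Keeping only the surviving terms and setting $d:=n/e$, I would rewrite the outer sum. The complementation symmetry of the regular system gives $e\in A(n)\iff d\in A(n)$, the surviving condition $(n/e)\mid k$ reads $d\mid k$, and $g(e)=g(n/d)$ with $n/e=d$, so that
\[
\sum_{j=1}^n f((j,n)_A)\zeta_n^{jk}=\sum_{\substack{d\in A(n)\\ d\mid k}}d\,g(n/d).
\]
A final application of \eqref{prop_gcd_A} (now with $k$ in the role of $j$) identifies the index set $\{d\in A(n):d\mid k\}$ with $A((k,n)_A)$, turning the right-hand side into $\sum_{d\in A((k,n)_A)}d\,(\mu_A*_A f)(n/d)$, exactly the claimed expression.

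I do not expect a genuine obstacle here; the proof is a direct manipulation. The only points demanding care are the bookkeeping for the complementation symmetry $e\leftrightarrow n/e$ of $A(n)$ and the verification that the truncated geometric sum isolates precisely the $A$-divisors $d\in A(n)$ with $d\mid k$. Both follow from the structural conditions (i)--(ii) defining a regular system together with property \eqref{prop_gcd_A}.
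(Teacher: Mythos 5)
Your proof is correct and follows essentially the same route as the paper's: expand $f((j,n)_A)$ by $A$-M\"obius inversion and \eqref{prop_gcd_A}, interchange the summations, evaluate the complete geometric sum to pick out the $d\in A(n)$ with $n/d\mid k$, reindex $d\mapsto n/d$ via the complementation symmetry of $A(n)$, and apply \eqref{prop_gcd_A} once more to identify the index set as $A((k,n)_A)$. The only difference is cosmetic: you make explicit the symmetry and reindexing steps that the paper leaves implicit, and you correctly read the statement's $d\in (k,n)_A$ as the intended $d\in A((k,n)_A)$.
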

	
	\begin{corollary} \label{Cor_zeta_f_real} 
		If $f$ is a real valued function, then 
		\begin{align} \label{id_f_cos}
			\sum_{j=1}^n f((j,n)_A)\cos (2\pi jk/n)  & = \sum_{d\in (k,n)_A} d\, (\mu_A*_A f)(n/d),\\
			\nonumber
			\sum_{j=1}^n f((j,n)_A)\sin (2\pi jk/n)  & = 0.
		\end{align}
	\end{corollary}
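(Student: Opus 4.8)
The plan is to derive both identities simultaneously by splitting the complex identity \eqref{id_f_exp} of Theorem \ref{Th_zeta_general} into its real and imaginary parts. First I would substitute $\zeta_n^{jk}=\cos(2\pi jk/n)+i\sin(2\pi jk/n)$ into the left-hand side of \eqref{id_f_exp}, rewriting it as
\[
\sum_{j=1}^n f((j,n)_A)\cos(2\pi jk/n) + i\sum_{j=1}^n f((j,n)_A)\sin(2\pi jk/n).
\]
Because $(j,n)_A$ is a positive integer for each $j$ and $f$ is real valued by hypothesis, every coefficient $f((j,n)_A)$ is real, so this display is genuinely the decomposition of the left-hand side of \eqref{id_f_exp} into its real and imaginary parts.

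Next I would verify that the right-hand side of \eqref{id_f_exp} is a real number. The generalized M\"obius function $\mu_A$ takes only the values $-1$ and $0$, hence is real valued; since $f$ is real valued and the $A$-convolution \eqref{A_convo} is a finite sum of products, the function $\mu_A *_A f$ is real valued as well. Each summation index $d$ is a positive integer, so every term $d\,(\mu_A*_A f)(n/d)$ is real, and therefore so is the entire sum $\sum_{d\in (k,n)_A} d\,(\mu_A*_A f)(n/d)$.

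Finally, equating the real and imaginary parts in \eqref{id_f_exp} yields the conclusion. Since the right-hand side is real, the imaginary part of the left-hand side must vanish, which is exactly the second identity $\sum_{j=1}^n f((j,n)_A)\sin(2\pi jk/n)=0$; equating the real parts reproduces the first identity \eqref{id_f_cos}. The only point requiring genuine care — and the nearest thing to an obstacle — is the observation that the right-hand side is real, which rests entirely on $\mu_A$ being $\{-1,0\}$-valued together with the realness of $f$; once this is in hand, the remainder is a purely formal separation of a known complex identity into its components, with no further computation needed.
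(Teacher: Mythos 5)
Your proposal is correct and takes exactly the paper's route: the paper's entire proof is the observation that, for real valued $f$, the right-hand side of \eqref{id_f_exp} is real, so separating \eqref{id_f_exp} into real and imaginary parts yields both identities at once. One small factual slip in your writeup: $\mu_A$ does not take only the values $-1$ and $0$ --- that restriction holds only at prime powers, and by multiplicativity $\mu_A(n)$ can equal $1$ (e.g., $\mu_D(6)=\mu(6)=1$) --- but this is harmless, since your argument uses only that $\mu_A$, and hence $\mu_A *_A f$, is real valued.
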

	
	Now we present a Menon-type identity, involving $(j-1,n)_A$, where $j$ runs over an $A$-reduced residue system (mod $n$), 
	that is, $(j,n)_A=1$. 
	
	\begin{theorem} \label{Th_Menon_type} Let $A$ be a regular system of divisors and let $n\in \N$.
		Then 
		\begin{align} \label{prod_Menon}
			\prod_{\substack{j=1\\ (j,n)_A=1}}^n (x^{(j-1,n)_A}-1) & = \prod_{d\in A(n)} \Phi_{A,d}(x)^{\varphi_A(n)/\varphi_A(d)}, 
		\end{align}
	\end{theorem}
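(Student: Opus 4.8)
The plan is to expand each factor on the left-hand side of \eqref{prod_Menon} into the building blocks $\Phi_{A,e}(x)$ and then collect exponents, reducing the whole identity to a single counting statement. First I would apply \eqref{Phi_gen_4} in the form $\prod_{e\in A(m)}\Phi_{A,e}(x)=x^m-1$ with $m=(j-1,n)_A$, combined with the description \eqref{prop_gcd_A} of the set $A((j-1,n)_A)$, to write, for each $j$ with $(j,n)_A=1$,
\[
x^{(j-1,n)_A}-1=\prod_{\substack{e\in A(n)\\ e\mid j-1}}\Phi_{A,e}(x).
\]
Here the term $j=1$ is accommodated by the convention $(0,n)_A=\max A(n)=n$, so that every $e\in A(n)$ divides $j-1=0$. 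Substituting this into the left-hand side of \eqref{prod_Menon} and interchanging the two products yields
\[
\prod_{\substack{j=1\\ (j,n)_A=1}}^n \bigl(x^{(j-1,n)_A}-1\bigr)=\prod_{e\in A(n)}\Phi_{A,e}(x)^{M(e)},
\]
where $M(e)=\#\{\,j:1\le j\le n,\ (j,n)_A=1,\ e\mid j-1\,\}$ for $e\in A(n)$.

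Comparing with the right-hand side of \eqref{prod_Menon}, the theorem becomes equivalent to the counting identity
\[
M(e)=\frac{\varphi_A(n)}{\varphi_A(e)}\qquad(e\in A(n)),
\]
and this is the step I expect to be the crux. I would prove it by the Chinese Remainder Theorem, exploiting that both conditions on $j$ are local. Writing $n=\prod_p p^{a_p}$ and $e=\prod_p p^{e_p}$ with $p^{e_p}\in A(p^{a_p})$, the condition $(j,n)_A=1$ is equivalent to $p^{t_p}\nmid j$ for every $p$ (where $t_p=t_A(p^{a_p})$ is the type), while $e\mid j-1$ is equivalent to $j\equiv1\pmod{p^{e_p}}$ for every $p$. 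Hence $M(e)=\prod_p M_p$ with $M_p=\#\{\,j\bmod p^{a_p}:p^{t_p}\nmid j,\ j\equiv1\pmod{p^{e_p}}\,\}$, so by the multiplicativity of $\varphi_A$ (see \eqref{varphi_A}) it suffices to verify $M_p=\varphi_A(p^{a_p})/\varphi_A(p^{e_p})$ at each prime power.

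The prime-power check splits into two easy cases. If $e_p=0$ there is no congruence constraint, so $M_p=p^{a_p}-p^{a_p-t_p}=\varphi_A(p^{a_p})$ by \eqref{varphi_A}, matching the target since $\varphi_A(1)=1$. If $e_p>0$, then $e_p\ge t_p$, so every $j\equiv1\pmod{p^{e_p}}$ is coprime to $p$ and in particular satisfies $p^{t_p}\nmid j$; thus the coprimality condition is vacuous and $M_p=p^{a_p-e_p}$, which equals $\varphi_A(p^{a_p})/\varphi_A(p^{e_p})=p^{a_p}(1-p^{-t_p})/\bigl(p^{e_p}(1-p^{-t_p})\bigr)$. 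Multiplying over $p$ gives $M(e)=\varphi_A(n)/\varphi_A(e)$ and hence \eqref{prod_Menon}. The only points demanding care are the identification $t_A(p^{e_p})=t_p$ for $p^{e_p}\in A(p^{a_p})$, which is precisely what regularity condition (ii) guarantees for the elements of $A(p^{a_p})$, and the bookkeeping of the $j=1$ term through the convention $(0,n)_A=n$; both are routine once isolated.
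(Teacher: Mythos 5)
Your proof is correct, and it takes a genuinely different route from the paper's. The paper obtains \eqref{prod_Menon} as an immediate specialization of the general weighted identity \eqref{id_f} of Theorem \ref{Th_Menon_gen}: one takes (formally) $f(n)=\log(x^n-1)$, so that $\mu_A *_A f=\log\Phi_{A,\DOT}(x)$ by \eqref{Phi_gen_4} and M\"obius inversion, and exponentiates; Theorem \ref{Th_Menon_gen} is in turn proved by expanding the indicator of $(j,n)_A=1$ through $\mu_A$, counting solutions of $kd\equiv 1 \pmod{e}$, and evaluating $\sum_{d\in A(n),\,(d,e)=1}\mu_A(d)/d$ via the Euler product \eqref{varphi_A}. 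You instead factor each $x^{(j-1,n)_A}-1$ into generalized cyclotomics using \eqref{Phi_gen_4} together with \eqref{prop_gcd_A} (your convention $(0,n)_A=n$ for the $j=1$ term is the right one, since $n=\max A(n)$), interchange the two finite products, and reduce everything to the counting lemma $M(e)=\varphi_A(n)/\varphi_A(e)$ for $e\in A(n)$, which you verify locally by the Chinese Remainder Theorem. That lemma is essentially Theorem \ref{Th_Menon_gen} with $f$ an indicator function, so the two proofs are M\"obius-sum versus CRT versions of the same count; yours is the more elementary and self-contained (no formal logarithms, no identity for general $f$), and it has the pleasant byproduct of making the integrality of the exponents $\varphi_A(n)/\varphi_A(d)$ transparent, since they arise as cardinalities. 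What the paper's route buys is the identity \eqref{id_f} for arbitrary $f$, whose method is then adapted to the Dirichlet-character analogue (Theorem \ref{Th_chi_gen}). Both arguments hinge on the same structural fact --- that $t_A(p^{b})=t_A(p^{a})$ whenever $p^{b}\in A(p^{a})$, so that $\varphi_A(p^{e_p})=p^{e_p}(1-p^{-t_p})$ --- which the paper cites from McCarthy and which you correctly extract directly from regularity condition (ii). Your two local cases ($e_p=0$ giving $M_p=\varphi_A(p^{a_p})$; $e_p\ge t_p$ rendering $p^{t_p}\nmid j$ vacuous) check out, as does the reduction of $(j,n)_A=1$ to the local conditions $p^{t_p}\nmid j$ for all $p^{a_p}\mid\mid n$.
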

	
	If $A=D$, then \eqref{prod_Menon} was given by the author \cite{Tot2023problem}, \cite[Eq.\ (45)]{Tot2023}.
	Here \eqref{prod_Menon} can be deduced from the following general result, known in the literature, even 
	in a more general form, see \cite[Th.\ 9.1]{Sit1978}. However, for the sake of completeness we also 
	give a direct and short proof of it. 
	
	\begin{theorem} \label{Th_Menon_gen} Let $A$ be a regular system of divisors and $f:\N \to \C$ an arbitrary arithmetic function. Then for every $n\in \N$,
		\begin{equation} \label{id_f}
			\sum_{\substack{j=1\\ (j,n)_A=1}}^n f((j-1,n)_A) = \varphi_A(n) \sum_{d\in A(n)} \frac{(\mu_A*_A f)(d)}{\varphi_A(d)}.
		\end{equation}
	\end{theorem}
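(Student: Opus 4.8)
The plan is to reduce the left-hand side to a weighted sum over $A(n)$ by generalized M\"obius inversion, and then to evaluate the resulting combinatorial counts. Write $g := \mu_A *_A f$, so that $f = 1 *_A g$ because $\mu_A$ is the $A$-inverse of the constant $1$ function. Using the symmetry $d\in A(m)\iff m/d\in A(m)$ of a regular system, this reads $f(m)=\sum_{d\in A(m)} g(d)$ for every $m\in\N$. Taking $m=(j-1,n)_A$ and invoking \eqref{prop_gcd_A}, which says that $d\in A((j-1,n)_A)$ precisely when $d\mid j-1$ and $d\in A(n)$, I obtain
\[
f((j-1,n)_A)=\sum_{\substack{d\in A(n)\\ d\mid j-1}} g(d).
\]

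First I would substitute this into the left-hand side of \eqref{id_f} and interchange the two summations, which yields
\[
\sum_{\substack{j=1\\ (j,n)_A=1}}^n f((j-1,n)_A)=\sum_{d\in A(n)} g(d)\, N(d), \qquad N(d):=\#\{1\le j\le n:\ (j,n)_A=1,\ j\equiv 1 \pmod d\}.
\]
Comparing with the right-hand side of \eqref{id_f}, the whole theorem then comes down to the single identity $N(d)=\varphi_A(n)/\varphi_A(d)$, valid for every $d\in A(n)$.

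Establishing this count is the heart of the matter and the step I expect to cost the most. I would prove it by multiplicativity. If $n=n_1n_2$ with $(n_1,n_2)=1$, then any $d\in A(n)$ factors uniquely as $d=d_1d_2$ with $d_i\in A(n_i)$ by property (i), the quantity $(j,n)_A$ splits as $(j,n_1)_A\,(j,n_2)_A$ over the coprime parts, and $j\equiv 1\pmod d$ is equivalent to the pair of congruences modulo $d_1$ and $d_2$; the Chinese Remainder Theorem then gives $N(d)=N_{n_1}(d_1)N_{n_2}(d_2)$, while $\varphi_A$ is multiplicative by \eqref{varphi_A}. So it suffices to treat a prime power $n=p^a$ with $d=p^b$, where $b\in\{0,t,2t,\dots,a\}$ and $t=t_A(p^a)$. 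Here $A(p^a)=\{1,p^t,\dots,p^a\}$, so $(j,p^a)_A=1$ is equivalent to $p^t\nmid j$. When $b=0$ the congruence is vacuous and $N(1)=\varphi_A(p^a)$; when $b\ge t$, the congruence $j\equiv 1\pmod{p^b}$ forces $j\equiv 1\pmod{p^t}$, so $p^t\nmid j$ holds automatically and $N(p^b)$ merely counts the residues $j\equiv 1\pmod{p^b}$ in $\{1,\dots,p^a\}$, namely $p^{a-b}$. In both cases a direct comparison with $\varphi_A(p^a)=p^a-p^{a-t}$ gives $N(p^b)=\varphi_A(p^a)/\varphi_A(p^b)$, completing the prime-power verification and hence the theorem. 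The only subtle points are the two multiplicativity facts, the factorization of $(j,n)_A$ and of $d$ over coprime components, both of which follow at once from the defining properties (i) and (ii); once these are in place the computation is routine.
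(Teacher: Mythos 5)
Your proof is correct, but it takes a genuinely different route from the paper's. The paper detects the coprimality condition by writing $\varrho_{\{1\}}((j,n)_A)=\sum_{d\in A((j,n)_A)}\mu_A(d)$ \emph{and} expands $f$ via $\mu_A*_Af$, which produces a double sum over pairs $(d,e)\in A(n)\times A(n)$, a count of solutions of $kd\equiv 1 \pmod e$ (nonzero exactly when $(d,e)=1$), and finally an evaluation of $\sum_{d\in A(n),\,(d,e)=1}\mu_A(d)/d$ through the Euler product \eqref{varphi_A}. You instead leave the condition $(j,n)_A=1$ untouched, expand only $f$, and reduce the theorem to the single counting lemma $N(d)=\varphi_A(n)/\varphi_A(d)$ for $d\in A(n)$, which you prove locally: a CRT splitting (using that $(j,n)_A$ and membership in $A(n)$ factor over coprime parts, both immediate from property (i)) plus the prime-power computation $N(p^b)=p^{a-b}$. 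This is a valid and arguably more transparent argument; it isolates the $A$-analogue of the classical fact that a reduced residue system mod $n$ contains exactly $\varphi(n)/\varphi(d)$ elements $\equiv 1 \pmod d$, a lemma of independent use (the paper's proof of Theorem \ref{Th_chi_gen} relies on a closely related count). The paper's route buys brevity once \eqref{varphi_A} is in hand and avoids any CRT bookkeeping, at the cost of the $(d,e)=1$ case analysis. One point you should make explicit: your ``direct comparison'' in the case $b\ge t$ tacitly uses $\varphi_A(p^b)=p^b-p^{b-t}$ with the \emph{same} type $t$, i.e.\ $t_A(p^b)=t_A(p^a)$ whenever $p^b\in A(p^a)$; this is true — it is immediate from condition (ii), since $A(p^{it})=\{1,p^t,\ldots,p^{it}\}$ forces $t_A(p^{it})=t$, and the paper cites McCarthy's Cor.\ 4.2 for exactly this fact at the corresponding step — but it deserves a sentence rather than being absorbed silently.
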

	
	Note that if $A=D$ and $f(n)=n$ ($n\in \N$), then identity \eqref{id_f} recovers
	\eqref{Menon_id_origi}.
	
	Another Menon-type identity is the following.
	
	\begin{theorem} \label{Th_Menon_type_2} Let $A$ be a regular system of divisors, let $n\in \N$
		and $\chi$ be a Dirichlet character \textup{(mod $n$)} with conductor $d$ \textup{($d\mid n$)}. 
		Then for real $x>1$ \textup{(or formally)},
		\begin{equation} \label{Menon_A_new}
			\prod_{j=1}^n \left(x^{(j-1,n)_A}-1\right)^{\Real(\chi(j))} = \prod_{d \delta \in A(n)} 
			\Phi_{A, d\delta}(x)^{\varphi(n)/\varphi(d\delta)}. 
		\end{equation}
		
		If $\chi$ is a primitive character \textup{(mod $n$)}, then 
		\begin{equation} \label{Menon_A_new_primitive}
			\Phi_{A,n}(x)= \prod_{j=1}^n \left(x^{(j-1,n)_A}-1\right)^{\Real(\chi(j))}.
		\end{equation}
	\end{theorem}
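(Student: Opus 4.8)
The plan is to turn the left-hand side of \eqref{Menon_A_new} into a product of the polynomials $\Phi_{A,e}(x)$, $e\in A(n)$, and to read off each exponent by orthogonality of Dirichlet characters. First I would apply identity \eqref{Phi_gen_4} to $m=(j-1,n)_A$ and the characterization \eqref{prop_gcd_A} of $(j-1,n)_A$ to write, for each $j$,
\[
x^{(j-1,n)_A}-1=\prod_{e\in A((j-1,n)_A)}\Phi_{A,e}(x)=\prod_{\substack{e\in A(n)\\ e\mid j-1}}\Phi_{A,e}(x).
\]
Substituting this into the left-hand side of \eqref{Menon_A_new} and interchanging the two products, the left-hand side becomes $\prod_{e\in A(n)}\Phi_{A,e}(x)^{N_e}$ with
\[
N_e=\sum_{\substack{j=1\\ j\equiv 1\pmod e}}^{n}\Real(\chi(j))=\Real\Bigl(\sum_{\substack{j=1\\ j\equiv 1\pmod e}}^{n}\chi(j)\Bigr).
\]
For $x>1$ every factor is a positive real, so these manipulations are justified by taking logarithms; alternatively one reads \eqref{Menon_A_new} as an identity between the exponents of each $\Phi_{A,e}(x)$. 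It then remains to evaluate the inner character sum for every $e\in A(n)$.

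The crux is the lemma that, for $e\mid n$,
\[
\sum_{\substack{j=1\\ j\equiv 1\pmod e}}^{n}\chi(j)=\begin{cases}\varphi(n)/\varphi(e), & d\mid e,\\[1mm] 0, & d\nmid e,\end{cases}
\]
where $d$ is the conductor of $\chi$. I would prove it as follows. Since $e\mid n$, reduction modulo $e$ is a surjective homomorphism $\pi\colon(\Z/n\Z)^{\ast}\to(\Z/e\Z)^{\ast}$, whose kernel $H$ has order $\varphi(n)/\varphi(e)$ and consists exactly of the residues $j$ with $(j,n)=1$ and $j\equiv 1\pmod e$. Because $\chi(j)=0$ for $(j,n)>1$, the sum equals $\sum_{j\in H}\chi(j)$, which by orthogonality of characters on the finite abelian group $H$ is $|H|$ when $\chi|_{H}$ is trivial and $0$ otherwise. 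Finally $\chi|_{H}$ is trivial precisely when $\chi$ factors through $(\Z/n\Z)^{\ast}/H\cong(\Z/e\Z)^{\ast}$, i.e. when $\chi$ can be defined modulo $e$, and by the definition of the conductor as the least modulus of definition this holds if and only if $d\mid e$. In particular each such sum is real, so $N_e=\varphi(n)/\varphi(e)$ when $d\mid e$ and $N_e=0$ otherwise; the $\Real$ in the statement is needed only to make the individual factors on the left well defined.

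Combining the two steps, only the indices $e\in A(n)$ with $d\mid e$ survive, and writing $e=d\delta$ gives
\[
\prod_{j=1}^{n}\left(x^{(j-1,n)_A}-1\right)^{\Real(\chi(j))}=\prod_{\substack{e\in A(n)\\ d\mid e}}\Phi_{A,e}(x)^{\varphi(n)/\varphi(e)}=\prod_{d\delta\in A(n)}\Phi_{A,d\delta}(x)^{\varphi(n)/\varphi(d\delta)},
\]
which is \eqref{Menon_A_new}; note that $\varphi(e)\mid\varphi(n)$ for $e\mid n$, so the exponents are positive integers. For \eqref{Menon_A_new_primitive} I would specialize to a primitive character, where $d=n$: then $d\mid e$ together with $e\in A(n)$ (hence $e\mid n$) forces $e=n$, leaving the single factor $\Phi_{A,n}(x)^{\varphi(n)/\varphi(n)}=\Phi_{A,n}(x)$. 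The main obstacle is the character-sum lemma; everything else is bookkeeping resting on \eqref{Phi_gen_4} and \eqref{prop_gcd_A}. Within the lemma I expect the only delicate point to be the clean equivalence ``$\chi|_{H}$ trivial $\iff d\mid e$'', which is exactly the standard description of the conductor as the smallest modulus modulo which $\chi$ is induced.
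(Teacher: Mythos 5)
Your proof is correct, and while its overall skeleton matches the paper's, the key step is handled by a genuinely different argument. The paper obtains the theorem as the specialization $f(m)=\log(x^m-1)$ of the general identity \eqref{Menon_A_new_f} of Theorem \ref{Th_chi_gen}; the proof of that identity opens exactly as yours does, expanding $f((j-1,n)_A)$ via $\mu_A *_A f$ and \eqref{prop_gcd_A}, which for your choice of $f$ is precisely the factorization $x^{(j-1,n)_A}-1=\prod_{e\in A(n),\, e\mid j-1}\Phi_{A,e}(x)$ coming from \eqref{Phi_gen_4}. Where you diverge is in evaluating the resulting sum $\sum_{j\equiv 1 \pmod{e}}\chi(j)$: the paper routes it through the induced primitive character $\chi^*$ (mod $d$), the counting lemma of \cite[Lemma 2.1]{Tot2019} for $j$ satisfying a double congruence with $(j,n)=1$, and the vanishing \eqref{chi_sum} of incomplete sums of primitive characters; you instead identify the support of the sum with the kernel $H$ of the reduction $(\Z/n\Z)^{*}\to(\Z/e\Z)^{*}$, of order $\varphi(n)/\varphi(e)$, apply orthogonality of $\chi|_H$ on the subgroup $H$, and invoke the standard fact that $\chi$ is trivial on $H$ (i.e., definable mod $e$) if and only if $d\mid e$. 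Your route is more structural and self-contained for this theorem, avoiding both the counting lemma and \eqref{chi_sum}; its only delicate point, which you correctly flag, is that the conductor characterization requires the divisibility $d\mid e$ for every modulus of definition, not merely minimality $d\le e$ (this rests on the gcd-closure of the set of moduli of definition, available in \cite[Ch.\ 9]{MonVau2007}). Note also that nothing of the paper's generality is lost: since your computation is linear in the quantities attached to each $e\in A(n)$, replacing $\log\Phi_{A,e}(x)$ by $(\mu_A *_A f)(e)$ throughout reproves Theorem \ref{Th_chi_gen} for arbitrary $f$ verbatim. The primitive case, where $d=n$ forces $e=n$, is handled identically in both arguments.
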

	
	If $A=D$, then \eqref{Menon_A_new} and \eqref{Menon_A_new_primitive} recover identities
	\eqref{Menon_D_new} and \eqref{Menon_D_new_primitive}, respectively. See the author \cite{Tot2018}
	for some related identities in the case $A=D$.
	
	Theorem \ref{Th_Menon_type_2} can be deduced from the next general result and its corollary.
	
	\begin{theorem} \label{Th_chi_gen} Let $A$ be a regular system of divisors, $f:\N \to \C$ be an arbitrary arithmetic function, 
		let $n\in \N$ and $\chi$ be a Dirichlet character \textup{(mod $n$)} with conductor $d$ \textup{($d\mid n$)}.
		Then 
		\begin{equation} \label{Menon_A_new_f}
			\sum_{j=1}^n f((j-1,n)_A) \chi(j) = \varphi(n) \sum_{d \delta \in A(n)} 
			\frac{(\mu_A *_A f)(d\delta)}{\varphi(d\delta)}. 
		\end{equation}
		
		If $\chi$ is a primitive character \textup{(mod $n$)}, then 
		\begin{equation} \label{Menon_A_new_primit_f}
			\sum_{j=1}^n f((j-1,n)_A) \chi(j) = (\mu_A *_A f)(n). 
		\end{equation}
	\end{theorem}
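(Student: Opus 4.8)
Theorem \ref{Th_chi_gen} — proof plan.

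The plan is to expand $f((j-1,n)_A)$ by $A$-M\"obius inversion, reduce the resulting inner sum to a character sum over a subgroup of $(\Z/n\Z)^\ast$, and then apply orthogonality. Set $g = \mu_A *_A f$. Since the constant function $1$ and $\mu_A$ are inverses with respect to the associative convolution $*_A$, we have $f = 1 *_A g$, equivalently $f(m) = \sum_{e\in A(m)} g(e)$ for every $m\in\N$. Taking $m=(j-1,n)_A$ and invoking property \eqref{prop_gcd_A}, according to which $e\in A((j-1,n)_A)$ holds iff $e\mid j-1$ and $e\in A(n)$, I get $f((j-1,n)_A)=\sum_{e\in A(n),\, e\mid j-1} g(e)$. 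Substituting this into the left-hand side of \eqref{Menon_A_new_f} and interchanging the order of summation yields
\[
\sum_{j=1}^n f((j-1,n)_A)\chi(j) = \sum_{e\in A(n)} g(e)\, T(e), \qquad T(e):=\sum_{\substack{j=1\\ j\equiv 1\ (\mathrm{mod}\ e)}}^n \chi(j).
\]

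It remains to evaluate $T(e)$ for each $e\in A(n)$, so $e\mid n$. Because $\chi(j)=0$ unless $(j,n)=1$, the sum $T(e)$ effectively runs over $H:=\{\,j\bmod n : j\equiv 1\ (\mathrm{mod}\ e),\ (j,n)=1\,\}$, which is exactly the kernel of the surjective reduction homomorphism $(\Z/n\Z)^\ast\to(\Z/e\Z)^\ast$. Thus $H$ is a subgroup of $G:=(\Z/n\Z)^\ast$ of order $|G|/\varphi(e)=\varphi(n)/\varphi(e)$, and $T(e)=\sum_{h\in H}\chi(h)$. By orthogonality of characters on $H$, this equals $|H|=\varphi(n)/\varphi(e)$ when $\chi|_H$ is trivial and $0$ otherwise. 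Now $\chi|_H$ is trivial precisely when $\chi$ factors through $G/H\cong(\Z/e\Z)^\ast$, i.e.\ when $\chi$ is induced by a character modulo $e$; by the definition of the conductor this is equivalent to $d\mid e$. Hence $T(e)=\varphi(n)/\varphi(e)$ if $d\mid e$ and $T(e)=0$ otherwise, and writing each contributing $e\in A(n)$ as $e=d\delta$ collapses the sum to $\varphi(n)\sum_{d\delta\in A(n)}g(d\delta)/\varphi(d\delta)$, which is \eqref{Menon_A_new_f}. For a primitive character $d=n$, so the only $A$-divisor of $n$ divisible by $d$ is $e=n$ (recall $n\in A(n)$), giving $T(n)=1$ and the left-hand side equal to $g(n)=(\mu_A*_A f)(n)$, which is \eqref{Menon_A_new_primit_f}.

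The one step that needs care is the identification of triviality of $\chi|_H$ with the divisibility $d\mid e$; everything else is formal convolution bookkeeping together with standard character orthogonality. I would justify it by the standard fact that a Dirichlet character modulo $n$ is induced by a character modulo a divisor $e$ of $n$ if and only if its conductor divides $e$, which follows at once from observing that $\chi|_H=1$ means $\chi$ is constant on the cosets of $H$ and therefore descends to a character on $(\Z/e\Z)^\ast$.
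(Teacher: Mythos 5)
Your proof is correct, but it takes a genuinely different route from the paper's. You expand $f$ by $A$-M\"obius inversion first and interchange sums once, reducing everything to the single character sum $T(e)=\sum_{j\equiv 1 \ (\mathrm{mod}\ e)}\chi(j)$ over $1\le j\le n$, which you evaluate group-theoretically: the residues $j$ with $(j,n)=1$ and $j\equiv 1 \pmod{e}$ form the kernel $H$ of the surjection $(\Z/n\Z)^\ast\to(\Z/e\Z)^\ast$, orthogonality on the subgroup $H$ gives $T(e)=\varphi(n)/\varphi(e)$ or $0$, and triviality of $\chi|_H$ is equivalent to $d\mid e$ by the characterization of the conductor. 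The paper instead works from the conductor side: it replaces $\chi$ by the inducing primitive character $\chi^*$ (mod $d$) via \eqref{induced_char}, splits the sum into residue classes $r$ (mod $d$), counts the $j$ in a reduced residue system satisfying the simultaneous congruences $j\equiv r \pmod{d}$ and $j\equiv 1 \pmod{e}$ by an explicit lemma (the count is $\varphi(n)\varphi((d,e))/(\varphi(d)\varphi(e))$ when $(d,e)\mid r-1$ and $0$ otherwise, citing \cite[Lemma 2.1]{Tot2019}), and then kills all terms with $d\nmid e$ using the vanishing of primitive character sums in progressions to moduli that are proper divisors, i.e.\ \eqref{chi_sum}. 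The two mechanisms are equivalent --- your statement that $\chi|_H$ is trivial iff $d\mid e$ is exactly what the combination of \eqref{induced_char} and \eqref{chi_sum} encodes --- but your version needs only one interchange of summation and no counting lemma, so it is shorter and more conceptual, while the paper's version stays within the elementary toolkit it also uses for Theorem \ref{Th_Menon_gen} (congruence counting plus explicit facts about induced characters), keeping the two Menon-type arguments parallel. One point to make fully explicit in a final write-up: the ``only if'' direction of your conductor criterion (that $\chi$ being induced by a character modulo $e$ forces $d\mid e$) rests on the standard fact that the set of defining moduli of $\chi$ is closed under gcd; a citation such as \cite[Ch.\ 9]{MonVau2007}, as in the paper, covers this.
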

	
	\begin{corollary}  \label{Cor_f_real}
		Let $f$ be a real valued function. If $\chi$ is a Dirichlet character \textup{(mod $n$)} with conductor 
		$d$ \textup{($d\mid n$)}, then 
		\begin{align*} 
			\sum_{j=1}^n f((j-1,n)_A) \Real(\chi(j)) & = \varphi(n) \sum_{d \delta \in A(n)} 
			\frac{(\mu_A *_A f)(d\delta)}{\varphi(d\delta)}, \\
			\sum_{j=1}^n f((j-1,n)_A) \Imag(\chi(j)) & = 0.
		\end{align*}
		
		If $\chi$ is a primitive character \textup{(mod $n$)}, then 
		\begin{align*} 
			\sum_{j=1}^n f((j-1,n)_A) \Real(\chi(j)) & = (\mu_A *_A f)(n).
		\end{align*}
	\end{corollary}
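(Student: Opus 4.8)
The plan is to derive Corollary \ref{Cor_f_real} directly from Theorem \ref{Th_chi_gen} by separating real and imaginary parts, and the essential preliminary observation is that, when $f$ is real valued, the right-hand side of \eqref{Menon_A_new_f}, namely $\varphi(n)\sum_{d\delta\in A(n)}(\mu_A *_A f)(d\delta)/\varphi(d\delta)$, is a real number. Indeed, $\varphi$ is integer valued, and the generalized M\"obius function $\mu_A$ takes values in $\{-1,0,1\}$, being multiplicative with $\mu_A(p^a)\in\{-1,0\}$; hence $(\mu_A *_A f)(m)=\sum_{e\in A(m)}\mu_A(e)f(m/e)$ is real for every $m$ whenever $f$ is real valued. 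The same reasoning shows that in the primitive case the right-hand side $(\mu_A *_A f)(n)$ of \eqref{Menon_A_new_primit_f} is real as well.

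With this in hand, I would write $\chi(j)=\Real(\chi(j))+i\,\Imag(\chi(j))$ and, since $f((j-1,n)_A)$ is real for each $j$, split the left-hand side of \eqref{Menon_A_new_f} into its real and imaginary parts:
\begin{equation*}
\sum_{j=1}^n f((j-1,n)_A)\chi(j) = \sum_{j=1}^n f((j-1,n)_A)\Real(\chi(j)) + i\sum_{j=1}^n f((j-1,n)_A)\Imag(\chi(j)).
\end{equation*}
Because the left-hand side equals the real quantity identified above, I would equate real parts to obtain the first claimed identity and equate imaginary parts to obtain $\sum_{j=1}^n f((j-1,n)_A)\Imag(\chi(j))=0$. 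The primitive case follows identically, using \eqref{Menon_A_new_primit_f} in place of \eqref{Menon_A_new_f}.

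There is no substantive obstacle here; the entire content is already packaged in Theorem \ref{Th_chi_gen}. The only point that requires any care is the verification that the right-hand side is real, which rests squarely on the defining property $\mu_A(p^a)\in\{-1,0\}$ of a regular system of divisors. Once that is noted, the corollary is immediate.
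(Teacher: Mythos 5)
Your proposal is correct and matches the paper's own proof, which likewise deduces both identities by observing that the right-hand side of \eqref{Menon_A_new_f} (respectively \eqref{Menon_A_new_primit_f}) is real when $f$ is real valued and then separating real and imaginary parts. Your explicit verification that $\mu_A$ takes values in $\{-1,0,1\}$, so that $(\mu_A *_A f)$ is real, is a welcome detail the paper leaves implicit, but it does not change the argument.
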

	
	It is possible to deduce some further related identities. For example, we have the next result, known
	in the case $A=D$, and proved by Moree and the author \cite[Cor.\ 6]{MorTot2020} for $A=U$.
	
	\begin{theorem} \label{Th_coonect_Ramanujan_sum} For every regular system $A$, $n>1$ and $x\in \C$, 
		$|x|<1$ \textup{(or formally)},
		\begin{equation} \label{Phi_c}
			\Phi_{A,n}(x) = \exp \left(-\sum_{k=1}^{\infty} \frac{c_{A,n}(k)}{k}x^k \right).
		\end{equation}
	\end{theorem}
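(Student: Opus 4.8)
The plan is to take logarithms and match power-series coefficients. First I would rewrite each factor of $\Phi_{A,n}(x)=\prod_{(j,n)_A=1}(x-\zeta_n^j)$ as $x-\zeta_n^j=-\zeta_n^j(1-\zeta_n^{-j}x)$, so that
\begin{equation*}
\Phi_{A,n}(x)=\Big(\prod_{(j,n)_A=1}(-\zeta_n^j)\Big)\prod_{(j,n)_A=1}(1-\zeta_n^{-j}x).
\end{equation*}
The key preliminary step is to show that the constant prefactor equals $1$. Since there are $\varphi_A(n)$ factors, this prefactor is $(-1)^{\varphi_A(n)}\zeta_n^{\,\sigma}$ with $\sigma=\sum_{(j,n)_A=1}j$, so I need $\zeta_n^{\sigma}=(-1)^{\varphi_A(n)}$, i.e. $\sigma=n\varphi_A(n)/2$.

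Next I would establish the reflection symmetry: because every $d\in A(n)$ divides $n$, the divisibility $d\mid(n-j)$ is equivalent to $d\mid j$, hence by \eqref{prop_gcd_A} one gets $(n-j,n)_A=(j,n)_A$. Thus $j\mapsto n-j$ is an involution on the $A$-reduced residue system $\pmod n$; here the hypothesis $n>1$ guarantees that $j=n$ is excluded (as $(n,n)_A=n\ne1$), so $j$ ranges in $\{1,\dots,n-1\}$ and $2\sigma=\sum_{(j,n)_A=1}\big(j+(n-j)\big)=n\varphi_A(n)$, giving $\sigma=n\varphi_A(n)/2$ as required. This yields the prefactor $=1$ and the convenient form $\Phi_{A,n}(x)=\prod_{(j,n)_A=1}(1-\zeta_n^{-j}x)$; in particular all roots lie on the unit circle, so $\Phi_{A,n}$ is nonvanishing for $|x|<1$ and its principal logarithm is defined there.

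Then I would take logarithms and expand. For $|x|<1$, using $\log(1-w)=-\sum_{k\ge1}w^k/k$ with $|w|=|\zeta_n^{-j}x|=|x|<1$ and interchanging the (absolutely convergent) sums,
\begin{equation*}
\log\Phi_{A,n}(x)=-\sum_{k=1}^\infty\frac{x^k}{k}\sum_{(j,n)_A=1}\zeta_n^{-jk}.
\end{equation*}
The inner sum is $c_{A,n}(-k)$; applying the involution $j\mapsto n-j$ once more (it sends $\zeta_n^{-jk}$ to $\zeta_n^{jk}$), or simply invoking that each $c_{A,n}(k)$ is a real integer by \eqref{c_A_S_id} so that $c_{A,n}(-k)=\overline{c_{A,n}(k)}=c_{A,n}(k)$, identifies it with $c_{A,n}(k)$. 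Exponentiating gives \eqref{Phi_c}.

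The main obstacle is the bookkeeping in the first part, namely verifying that the unit-modulus prefactor collapses to $1$ (equivalently that $\Phi_{A,n}(0)=1$), which is exactly where both $n>1$ and the symmetry $(n-j,n)_A=(j,n)_A$ are needed; once the product is written as $\prod(1-\zeta_n^{-j}x)$, the remainder is the standard power-sum expansion and is routine. Since every manipulation above is the corresponding relation between the polynomial $\prod(1-\zeta_n^{-j}x)$ and the power sums of its roots, the same computation proves the identity as formal power series, so the ``$|x|<1$ or formally'' clause is covered.
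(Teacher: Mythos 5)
Your proof is correct, but it takes a genuinely different route from the paper's. The paper proves \eqref{Phi_c} in two lines from machinery already in place: it cites the known classical identity \eqref{cyclotomic_Ramanujan_exp} for $\Phi_d(x)$, then applies the factorization $\Phi_{A,n}(x)=\prod_{d\mid n,\ \gamma_A(n)\mid d}\Phi_d(x)$ of Theorem \ref{Th_A} together with the matching Ramanujan-sum identity \eqref{id_Ramanujan_gamma}, so that the generalized exponential identity is literally the product of the classical ones. You instead redo the classical computation directly at the level of the system $A$: you normalize each factor to $1-\zeta_n^{-j}x$, verify that the unit-modulus prefactor is $1$ (equivalently $\Phi_{A,n}(0)=1$) via the involution $j\mapsto n-j$ on the $A$-reduced residue system --- where $(n-j,n)_A=(j,n)_A$ correctly follows from \eqref{prop_gcd_A} because every $d\in A(n)$ divides $n$, and the hypothesis $n>1$ is rightly located as excluding $j=n$ --- and then expand the logarithms and identify $\sum_{(j,n)_A=1}\zeta_n^{-jk}=c_{A,n}(k)$, again by the involution or by the reality of $c_{A,n}(k)$ from \eqref{c_A_S_id}. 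What each approach buys: yours is self-contained (it needs neither Theorem \ref{Th_A}, nor \eqref{id_Ramanujan_gamma}, nor the citation to Herrera-Poyatos and Moree, and it reproves the $A=D$ case as a byproduct), at the cost of the constant-term bookkeeping; the paper's is shorter but inherits its input \eqref{Phi_A_irred} and the classical fact. One small point of hygiene in your write-up: the assertion that ``the principal logarithm of $\Phi_{A,n}$ is defined'' on $|x|<1$ is not quite what you use (the values of $\Phi_{A,n}$ could meet the negative real axis); what your argument actually does, and what suffices, is to set $L(x)=\sum_{(j,n)_A=1}\log(1-\zeta_n^{-j}x)$ termwise by the Mercator series and exponentiate, noting that $e^{\log(1-w)}=1-w$ holds for each factor since $\Real(1-\zeta_n^{-j}x)>0$ when $|x|<1$; the formal power series reading is likewise fine once the constant term is shown to be $1$, as you do.
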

	
	\subsection{Coefficients} 
	
	Now consider the coefficients of the monic polynomials $\Phi_{A,S,n}(x)$ of degree $\varphi_{A,S}(n)$.  
	It follows from identity \eqref{c_A_S_id} applied for $k=1$ that the
	coefficient of the term $x^{\varphi_{A,S}(n)-1}$ is $-c_{A,S,n}(1)= -\mu_{S,A}(n)$. 
	In order to deduce formulas for the other coefficients as well, let
	\begin{equation*}
		\Phi_{A,S,n}(x)= \sum_{j=0}^{\varphi_{A,S}(n)} a_{A,S,n}(j) x^j.
	\end{equation*}
	
	We have the following generalization of the M\"oller-Endo identities. 
	
	\begin{theorem} \label{Th_binom} For every $A,S,n,k$,
		\begin{equation} \label{form_coeff} 
			a_{A,S,n}(k)= (-1)^{\varrho_S(n)} \sum_{\substack{j_1,j_2,\ldots,j_k\ge 0\\ j_1+2j_2+\cdots +kj_k=k}}
			\prod_{d=1}^k (-1)^{j_d} \binom{\mu_{A,S}(n/d)}{j_d},
		\end{equation}
		with the convention $\mu_{A,S}(t)=0$ if $t$ is not an integer.
	\end{theorem}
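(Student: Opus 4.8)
The plan is to read the coefficients off directly from the product representation \eqref{Phi_gen_1_bis}, that is, from
\[
\Phi_{A,S,n}(x) = (-1)^{\varrho_S(n)} \prod_{d\in A(n)} \left(1-x^d\right)^{\mu_{A,S}(n/d)},
\]
by expanding each factor as a generalized binomial series and then collecting the coefficient of $x^k$. First I would fix $n$ and regard $x$ as a formal variable, equivalently work in the region $|x|<1$ where every factor carrying a negative exponent converges; since the full product is, up to the sign, the polynomial $\Phi_{A,S,n}(x)$ of degree $\varphi_{A,S}(n)$, its formal expansion coincides with the polynomial, so the coefficient of $x^k$ is in any case a finite sum and the rearrangements below are legitimate.

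Next I would apply Newton's binomial theorem factorwise. For every $d\in A(n)$ and every integer value of the exponent $m_d:=\mu_{A,S}(n/d)$ one has
\[
\left(1-x^d\right)^{m_d} = \sum_{j_d\ge 0} \binom{m_d}{j_d} (-1)^{j_d} x^{d\, j_d},
\]
the series terminating when $m_d$ is a nonnegative integer. Multiplying these series over all $d\in A(n)$ and gathering the terms of total degree $k$, a monomial $x^k$ is produced exactly by the choices of nonnegative integers $(j_d)_d$ subject to the composition constraint $\sum_d d\, j_d = k$, each such choice contributing $\prod_d (-1)^{j_d}\binom{m_d}{j_d}$. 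Reinstating the outer sign $(-1)^{\varrho_S(n)}$ then yields \eqref{form_coeff}.

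The main care, and the only real obstacle, is bookkeeping with the index set rather than any analytic estimate. The constraint $\sum_d d\, j_d = k$ forces $j_d=0$ for every $d>k$, so the product effectively ranges over $d\in A(n)$ with $d\le k$, an index carrying $j_d=0$ contributing the harmless factor $\binom{m_d}{0}=1$. In rewriting this as the product over $d=1,\dots,k$ in \eqref{form_coeff} one reads the exponent as $\mu_{A,S}(n/d)$, and the stated convention $\mu_{A,S}(t)=0$ for non-integral $t$ disposes of the indices with $d\nmid n$, for which $\binom{0}{j_d}$ vanishes as soon as $j_d\ge 1$. I would keep the range of $d$ tied to $A(n)$ throughout, and then confirm that the specialization $A=D$, $S=\{1\}$ recovers the classical M\"oller--Endo identities as a consistency check.
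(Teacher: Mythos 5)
Your proposal is, in substance, the paper's own proof: the paper likewise expands each factor of \eqref{Phi_gen_1_bis} as a Newton binomial series (for $|x|<1$, or formally) and identifies the coefficient of $x^k$, exactly as you do, so approach and execution coincide.

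One remark on the bookkeeping that you correctly single out as the only delicate point. Your expansion over $d\in A(n)$ produces the coefficient formula with exponents supported on $A(n)$; identifying this with the product over $d=1,\dots,k$ in \eqref{form_coeff} requires the exponent to vanish not only for $d\nmid n$ (which the stated convention covers, as you note) but also for $d\mid n$ with $d\notin A(n)$, and for a general regular system $\mu_{A,S}(n/d)$ need not vanish there. For instance, with $A=U$, $S=\{1\}$, $n=p^2$, $d=p$ one has $d\notin U(p^2)$ yet $\mu^*(p)=-1\neq 0$; taking this term literally in \eqref{form_coeff} with $k=p$ gives the two contributions $(j_1,j_p)=(p,0)$ and $(0,1)$, hence the value $2$, whereas all coefficients of $\Phi^*_{p^2}(x)=(x^{p^2}-1)/(x-1)$ equal $1$. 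So the convention must be read as declaring the exponent zero whenever $d\notin A(n)$, not merely when $n/d\notin\N$. With that reading your argument is complete --- and the paper's proof makes exactly the same silent move, replacing $\prod_{d\in A(n)}$ by $\prod_{d=1}^{\infty}$ under the same convention, so you have faithfully reproduced it, gap included. For $A=D$ the two readings coincide (every divisor belongs to $D(n)$), which is why the classical M\"oller--Endo case raises no issue; your stated instinct to keep the range of $d$ tied to $A(n)$ throughout is precisely what repairs the general case, and it would strengthen your write-up to say so explicitly rather than delegate it to the convention.
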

	
	This shows that $a_{A,S,n}(1)= - (-1)^{\varrho_S(n)} \mu_{A,S}(n)$. Hence, according to Corollary \ref{Cor_pal}, 
	\begin{align*}
		a_{A,S,n}(\varphi_{A,S}(n)-1)= - \mu_{A,S}(n),
	\end{align*}
	as mentioned above. Also,
	\begin{align*}
		(-1)^{\varrho_S(n)} a_{A,S,n}(2)= a_{A,S,n}(\varphi_{A,S}(n)-2)= \frac{\mu_{A,S}(n)(\mu_{A,S}(n)-1)}{2} -\mu_{A,S}(n/2),
	\end{align*}
	and so on, similar to the classical case.
	
	Now let $S=\{1\}$ and let $a_{A,n}(k) := a_{A,\{1\},n}(k)$ denote the coefficients of
	$\Phi_{A,n}(x)$. The identity of Corollary \ref{Cor_kappa} shows that to study these coefficients
	it is enough to consider the case when $n$ is replaced by $\kappa_A(n)$.
	As a generalization of the Grytczuk-Tropak recursion formula we prove the following result.
	
	\begin{theorem} \label{Th_recursion}
		Let $A$ be a regular system of divisors. If $n$ is a product of $A$-primitive integers, then 
		for every $k$ with $1\le k \le \varphi_{A}(n)$,
		\begin{equation} \label{recursion}
			a_{A,n}(k)= -\frac{\mu_A(n)}{k} \sum_{j=1}^k a_{A,n}(k-j) \mu_A((j,n)_A)\varphi_A((j,n)_A),
		\end{equation}
		where $a_{A,n}(0)=1$.
	\end{theorem}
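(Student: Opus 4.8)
The plan is to turn the exponential representation of Theorem~\ref{Th_coonect_Ramanujan_sum} into a linear recurrence for the coefficients by logarithmic differentiation, and then to use the primitivity hypothesis to rewrite the Ramanujan sums $c_{A,n}(k)$ into the shape appearing in \eqref{recursion}. Starting from
\begin{equation*}
\Phi_{A,n}(x) = \exp\left(-\sum_{k=1}^{\infty} \frac{c_{A,n}(k)}{k}\, x^k\right)
\end{equation*}
and writing $\Phi_{A,n}(x) = \sum_{m\ge 0} a_{A,n}(m) x^m$ with $a_{A,n}(0)=1$, I would differentiate the logarithm to get $\Phi_{A,n}'(x)/\Phi_{A,n}(x) = -\sum_{k\ge 1} c_{A,n}(k) x^{k-1}$, clear the denominator, and compare the coefficient of $x^{k-1}$ on both sides of $\Phi_{A,n}'(x) = -\Phi_{A,n}(x)\sum_{k\ge 1} c_{A,n}(k)x^{k-1}$. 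This yields the Newton-type identity
\begin{equation*}
k\, a_{A,n}(k) = -\sum_{j=1}^{k} a_{A,n}(k-j)\, c_{A,n}(j),
\end{equation*}
which already matches \eqref{recursion} once $c_{A,n}(j)$ is identified with $\mu_A(n)\mu_A((j,n)_A)\varphi_A((j,n)_A)$.

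For that identification I would invoke the H\"older-type identity \eqref{Holder}, giving $c_{A,n}(j) = \varphi_A(n)\mu_A(n/(j,n)_A)/\varphi_A(n/(j,n)_A)$, and then exploit the hypothesis that $n$ is a product of $A$-primitive integers. Writing $n = \prod_i p_i^{a_i}$ with each $p_i^{a_i}$ primitive (so $A(p_i^{a_i}) = \{1,p_i^{a_i}\}$ and $\mu_A(p_i^{a_i}) = -1$), property (i) of a regular system forces $A(n) = U(n)$, so $d := (j,n)_A$ is a unitary divisor of $n$ and $(d,n/d)=1$. The multiplicativity of $\varphi_A$ and $\mu_A$ then gives $\varphi_A(n) = \varphi_A(d)\varphi_A(n/d)$ and $\mu_A(n) = \mu_A(d)\mu_A(n/d)$, so the H\"older expression collapses to $\varphi_A(d)\mu_A(n/d)$. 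Because $d$ is itself a product of primitive prime powers, $\mu_A(d) = \pm 1$ and hence $\mu_A(d)^2 = 1$, which lets me rewrite $\varphi_A(d)\mu_A(n/d) = \mu_A(n)\mu_A(d)\varphi_A(d)$. This is precisely $\mu_A(n)\mu_A((j,n)_A)\varphi_A((j,n)_A)$; substituting it into the Newton identity produces \eqref{recursion}, with $a_{A,n}(0)=1$ recovered from $\Phi_{A,n}(0)=1$.

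The formal power series steps (the logarithmic derivative and the extraction of the coefficient of $x^{k-1}$) are routine. The one substantive point, where I would concentrate the argument, is the collapse of the H\"older quotient to the symmetric product $\mu_A(n)\mu_A(d)\varphi_A(d)$: this rests entirely on $A(n)=U(n)$ and on $\mu_A(d)^2=1$, both of which hold only because every prime power factor of $n$ is $A$-primitive. I would therefore flag explicitly that if $(j,n)_A$ failed to be a unitary divisor the factorizations of $\varphi_A(n)$ and $\mu_A(n)$ would break down and the recurrence in this clean form would no longer follow; this is exactly why, via Corollary~\ref{Cor_kappa}, the study of the coefficients reduces to the case where $n$ is replaced by $\kappa_A(n)$, a product of primitive prime powers.
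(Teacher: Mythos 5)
Your proposal is correct and follows essentially the same route as the paper: the paper likewise reduces \eqref{recursion} to the Newton-type recursion $k\,a_{A,n}(k)=-\sum_{j=1}^{k}a_{A,n}(k-j)\,c_{A,n}(j)$ and then identifies $c_{A,n}(j)=\mu_A(n)\mu_A((j,n)_A)\varphi_A((j,n)_A)$ from the H\"older-type identity \eqref{Holder}. The only cosmetic differences are that you derive the recursion by logarithmic differentiation of the exponential formula of Theorem \ref{Th_coonect_Ramanujan_sum} rather than citing Vi\`ete's and Newton's formulas, and you establish the Ramanujan-sum identity globally via $A(n)=U(n)$ and unitary factorization, where the paper computes $c_{A,p^t}(k)$ for $A$-primitive $p^t$ and then invokes multiplicativity in $n$.
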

	
	Note that in the classical case ($A=D$) \eqref{recursion} only holds for squarefree values of $n$ (fact 
	omitted in some texts). However, in the unitary case ($A=U$) \eqref{recursion} holds for every $n\in \N$.
	
	\section{Proofs} \label{Section_Proofs}
	
	\begin{proof}[Proof of Theorem {\rm \ref{Th_A_S}}] More generally, let $f:\N\to \C$ be an arbitrary function and $F_f(n):=\sum_{j=1}^n f(j/n)$. Then 
		\begin{equation*}
			S_{f,A,S}(n):= \sum_{\substack{j=1\\ (j,n)_A\in S}}^n f(j/n)= \sum_{j=1}^n f(j/n) \varrho_S((j,n)_A)=
			\sum_{j=1}^n f(j/n) \sum_{d\in A((j,n)_A)} \mu_{A,S}(d),
		\end{equation*}
		by \eqref{varrho_S}. Using property \eqref{prop_gcd_A} we deduce that 
		\begin{equation*}
			S_{f,A,S}(n)= \sum_{d\in A(n)} \mu_{A,S}(d) \sum_{\substack{j=1\\ d\mid j}}^n f(j/n) = 
			\sum_{d\in A(n)} \mu_{A,S}(d) F_f(n/d). 
		\end{equation*}
		
		Note that this is generalization of the the Hurwitz lemma, recovered for $A=D$, $S=\{1\}$. 
		Now if (formally) $f(n)=\log (x-e^{2\pi in})$, then $F_f(n)=\sum_{j=1}^n \log (x-e^{2\pi ij/n}) = \log (x^n-1)$, 
		and deduce that  
		\begin{equation} \label{sum_id}
			\sum_{\substack{j=1\\ (j,n)_A\in S}}^n \log (x-e^{2\pi ij/n}) = \sum_{d\in A(n)} \log(x^d-1) \mu_{A,S}(n/d),    
		\end{equation} 
		equivalent to \eqref{Phi_gen_1}. Now \eqref{Phi_gen_1_bis} follows by \eqref{Phi_gen_1} and \eqref{varrho_S}
		
		In terms of the $A$-convolution \eqref{sum_id} shows that
		\begin{equation} \label{id_first} 
			\log \Phi_{A,S,\DOT}(x) =  \log(x^{\DOT}-1) *_A \mu_{A,S},    
		\end{equation} 
		that is, using \eqref{mu_A_S},
		\begin{equation}  \label{log_Phi_A_S}
			\log \Phi_{A,S,\DOT}(x) =  \log(x^{\DOT}-1) *_A \mu_A *_A \varrho_S.    
		\end{equation} 
		
		If $S=\{1\}$, then \eqref{log_Phi_A_S}  gives
		\begin{equation}  \label{log_Phi_A}
			\log \Phi_{A,\DOT}(x) =  \log(x^{\DOT}-1) *_A \mu_A,   
		\end{equation} 
		and combining \eqref{log_Phi_A_S} and \eqref{log_Phi_A} we have
		\begin{equation*}
			\log \Phi_{A,S,\DOT}(x) = \log \Phi_{A,\DOT}(x) *_A \varrho_S,    
		\end{equation*} 
		giving \eqref{Phi_gen_2}.
		
		From \eqref{log_Phi_A_S} we also have 
		\begin{equation*}  
			\log \Phi_{A,S,\DOT}(x) *_A \1 =  \log(x^{\DOT}-1) *_A \varrho_S,    
		\end{equation*} 
		where $\1(n)=1$ ($n\in \N$). This shows the validity of \eqref{Phi_gen_3},
		which reduces to \eqref{Phi_gen_4} if $S=\{1\}$.
	\end{proof}
	
	\begin{proof}[Proof of Corollary {\rm \ref{Cor_pal}}]
		This is a direct consequence of identities \eqref{Phi_gen_1}, \eqref{Phi_gen_1_bis} and \eqref{form_Euler_A_S}.
	\end{proof}
	
	\begin{proof}[Proof of Corollary {\rm \ref{Cor_x_n_1}}]
		If $1\in S$, then $\mu_{A,S}(1)=\mu_A(1)\varrho(1)=1\ne 0$. Hence the function $\mu_{A,S}$ has an inverse with respect 
		to $A$-convolution, we denote it by $h_{A,S}$. That is, $h_{A,S} *_A \mu_{A,S}=\varepsilon$, where $\varepsilon(n)=\lfloor 1/n \rfloor$ ($n\in \N$). From
		\eqref{id_first} we obtain that
		\begin{equation*}  
			\log(x^{\DOT}-1) = \log \Phi_{A,S,\DOT}(x) *_A h_{A,S},    
		\end{equation*} 
		equivalent to \eqref{id_x_n_1}.
	\end{proof}
	
	\begin{proof}[Proof of Corollary {\rm \ref{Cor_kappa}}] 
		By the definition of the function $\mu_A$,
		if $d$ is not a product of $A$-primitive integers, then $\mu_A(d)=0$.
		Therefore, for every $n\in \N$, using \eqref{Phi_gen_1},
		\begin{align} \nonumber
			\Phi_{A,n}(x) & = \prod_{d\in A(n)} \left(x^{n/d}-1 \right)^{\mu_{A}(d)}  
			= \prod_{\substack{d\in A(n)\\ d\in A(\kappa_A(n))}} \left(x^{n/d}-1 \right)^{\mu_{A}(d)} \\
			&  = \prod_{d\in A(\kappa_A(n))} \left((x^{n/\kappa(n)})^{\kappa(n)/d}-1 \right)^{\mu_{A}(d)} 
			= \Phi_{A,\kappa(n)}(x^{n/\kappa(n)}). \nonumber
		\end{align}
	\end{proof}

	\begin{proof}[Proof of Theorem {\rm \ref{Th_A}}]
		Apply (formally) Theorem \ref{Th_general} in the case $g(n)=\log \Phi_n(x)$, $g_A(n)=\log \Phi_{A,n}(x)$, where 
		$f(n)=\log (x^n-1)$ by taking into account \eqref{Phi_gen_4}. We deduce that
		\begin{equation*}
			\log \Phi_{A,n}(x)= \sum_{\substack{d\mid n\\ \gamma_A(n)\mid d}} \log \Phi_d(x),
		\end{equation*}
		which gives \eqref{Phi_A_irred}.
	\end{proof}
	
	\begin{proof}[Proof of Corollary {\rm \ref{Cor_gen_irred}}] This is a direct consequence of identities
		\eqref{Phi_gen_2} and \eqref{Phi_A_irred}.
	\end{proof}
	
	\begin{proof}[Proof of Theorem {\rm \ref{Th_general}}]
		Let 
		\begin{equation*} 
			f(n)= \sum_{d\mid n} g(d) = \sum_{d\in A(n)} g_A(d) \quad (n\in \N).
		\end{equation*}
		
		Then we have
		\begin{equation*} 
			g_A(n)= \sum_{d\in A(n)} f(d)\mu_A(n/d) = \sum_{d\in A(n)} \mu_A(n/d) \sum_{\delta \mid d} g(\delta)
		\end{equation*}
		\begin{equation*} 
			= \sum_{\substack{\delta jm =n\\ \delta j\in A(n)}} g(\delta) \mu_A(m) 
			=\sum_{\delta t=n} g(\delta) \sum_{\substack{jm =t\\ \delta j\in A(n)}} \mu_A(m).
		\end{equation*}
		
		Since for every regular system $A$, $d\in A(n)$ holds if and only if $n/d\in A(n)$ we deduce that
		$\delta j\in A(n)$ if and only if $n/(\delta j)=m \in A$, and have
		\begin{equation*} 
			g_A(n) =\sum_{\delta t=n} g(\delta) \sum_{\substack{jm =t\\ m \in A(n)}} \mu_A(m)=
			\sum_{\delta \mid n} g(\delta) \sum_{\substack{m \mid n/\delta \\ m \in A(n)}} \mu_A(m).
		\end{equation*}
		
		We show that for every fixed $n$ and $\delta$ with $\delta \mid n$,
		\begin{equation*} 
			\sum_{\substack{m \mid n/\delta \\ m \in A(n)}} \mu_A(m) = \begin{cases} 1, & \text{ if } \gamma_A(n)\mid \delta, 
				\\ 0, &\text{ otherwise,} \end{cases}
		\end{equation*}
		which will finish the proof.
		
		To do this, let $n=p_1^{a_1}\cdots p_r^{a_r}$, $\delta=p_1^{b_1}\cdots p_r^{b_r}$ with
		$0\le b_i\le a_i$ ($1\le i\le r$). Note that $m\in A(n)$ holds if and only if
		$m=p_1^{c_1t_1}\cdots p_r^{c_rt_r}$ with $0\le c_i\le a_i/t_i$, where $t_i$ is the type of $p_i^{a_i}$ ($1\le i \le r$). Therefore,
		since the function $\mu_A$ is multiplicative,
		\begin{equation*}
			\sum_{\substack{m \mid n/\delta \\ m \in A(n)}} \mu_A(m) = \sum_{m=p_1^{c_1t_1}\cdots p_r^{c_rt_r} \, \mid \, p_1^{a_1-b_1}\cdots p_r^{a_r-b_r}} \mu_A(m)
		\end{equation*}
		\begin{equation*}
			= \sum_{0\le c_1\le (a_1-b_1)/t_1} \mu_A(p_1^{c_1t_1}) 
			\cdots \sum_{0\le c_r\le (a_r-b_r)/t_r} \mu_A(p_r^{c_rt_r}) 
		\end{equation*}
		\begin{equation*}
			= \sum_{0\le c_1\le (a_1-b_1)/t_1} \mu(p_1^{c_1}) 
			\cdots \sum_{0\le c_r\le (a_r-b_r)/t_r} \mu(p_r^{c_r}), 
		\end{equation*}
		where $\mu$ is the classical M\"{o}bius function.
		
		Here if $(a_i-b_i)/t_i\ge 1$ for some $i$, then $\sum_{c_i} \mu(p_i^{c_1})=\mu(1)+\mu(p)=0$, and 
		the product of the sums is also zero. Otherwise, $(a_i-b_i)/t_i< 1$ for all $i$ holds if and only if $a_i-b_i<t_i$ 
		for all $i$, equivalent to $a_i-t_i+1\le b_i$ for all $i$, that is, $\gamma_A(n)\mid \delta$. In this case for all $i$,
		$\sum_{c_i} \mu(p_i^{c_i})=\mu(1)=1$, and the proof is ready.
	\end{proof}
	
	\begin{proof}[Proof of Theorem {\rm \ref{Th_repr_cos}}]
		Apply identity \eqref{id_f_cos} to the function $f(n)=\log (x^n-1)$, where $x>1$ is real (or formally), and take 
		into account that $\mu_A *_A f= \Phi_{A,\DOT}(x)$ by \eqref{Phi_gen_4} and M\"{o}bius inversion.
	\end{proof}
	
	\begin{proof}[Proof of Theorem {\rm \ref{Th_zeta_general}}]
		We have by using that $f(n)=\sum_{d\in A(n)} (\mu_A *_A f)(d)$ ($n\in \N$) and property \eqref{prop_gcd_A},
		\begin{equation*}
			\sum_{j=1}^n f((j,n)_A)\zeta_n^{jk}= \sum_{j=1}^n \zeta_n^{jk} \sum_{d\in A((j,n)_A)}
			(\mu_A*_A f)(d)
			= \sum_{j=1}^n \zeta_n^{jk} \sum_{\substack{d \mid j\\ d\in A(n)}}
			(\mu_A*_A f)(d)
		\end{equation*}
		\begin{equation*}
			= \sum_{d\in A(n)} (\mu_A *_A f)(d) \sum_{\substack{j=1\\d\mid j}}^n \zeta_n^{jk}
			= \sum_{\substack{d\in A(n)\\ n/d \mid k}} (\mu_A *_A f)(d) \frac{n}{d} 
			= \sum_{\substack{d\in A(n)\\ d\mid k}} d (\mu_A *_A f)(n/d),  
		\end{equation*}
		giving \eqref{id_f_exp} by using \eqref{prop_gcd_A} again.
	\end{proof}
	
	\begin{proof}[Proof of Corollary {\rm \ref{Cor_zeta_f_real}}]
		If $f$ is a real valued function, then the right hand side of \eqref{id_f_exp}
		is real, and we deduced the given identities.
	\end{proof}
	
	\begin{proof}[Proof of Theorem {\rm \ref{Th_Menon_type}}]
		Apply Theorem \ref{Th_Menon_gen} by taking (formally) $f(n)=\log (x^n-1)$.
		Then  $\mu_A *_A f= \log \Phi_{A,\DOT}(x)$ by \eqref{Phi_gen_4} and M\"{o}bius inversion.
		We deduce that  
		\begin{equation*}
			\sum_{\substack{j=1\\ (j,n)_A=1}}^n \log \left(x^{(j-1,n)_A}-1\right) = \varphi_A(n) \sum_{d\in A(n)} \frac{\log \Phi_{A,d}(x)}{\varphi_A(d)},
		\end{equation*}
		equivalent to the given identity.
	\end{proof}
	
	\begin{proof}[Proof of Theorem {\rm \ref{Th_Menon_gen}}]
		Let $M_{A,f}(n)$ denote the left hand side of \eqref{id_f}. We have by property \eqref{prop_gcd_A},
		\begin{equation*} 
			M_{A,f}(n) =  \sum_{j=1}^n f((j-1,n)_A) \sum_{d\mid (j,n)_A} \mu_A(d) 
			= \sum_{d\in A(n)} \mu_A(d) \sum_{\substack{j=1\\ d\mid j}}^n f((j-1,n)_A).
		\end{equation*}
		
		By using that $f(n)=\sum_{d\in A(n)} (\mu_A *_A f)(d)$ ($n\in \N$), we deduce 
		\begin{equation*}
			S_{A,f,d}(n):= \sum_{\substack{j=1\\ d\mid j}}^n f((j-1,n)_A) = \sum_{k=1}^{n/d} f((kd-1,n)_A) = \sum_{k=1}^{n/d} \sum_{e\in A((kd-1,n)_A)} (\mu_A*_A f)(e)
		\end{equation*}
		\begin{equation*}
			= \sum_{k=1}^{n/d} \sum_{\substack{e\mid kd-1\\ e\in A(n)}} (\mu_A*_A f)(e)= 
			\sum_{e\in A(n)} (\mu_A *_A f)(e) \sum_{\substack{k=1\\ kd\equiv 1\, \text{(mod $e$)}}}^{n/d} 1,
		\end{equation*}
		where the inner sum is $n/(de)$ if $(d,e)=1$ and $0$ otherwise. This gives
		\begin{equation*}
			S_{A,f,d}(n) = \frac{n}{d} \sum_{\substack{e\in A(n) \\(e,d)=1}} \frac{(\mu_A*_Af)(e)}{e}.
		\end{equation*}
		
		Thus
		\begin{equation*} 
			M_{A,f}(n)= \sum_{d\in A(n)} \mu_A(d) S_{A,f,d}(n) = 
			n \sum_{e\in A(n)} \frac{(\mu_A*_A f)(e)}{e} \sum_{\substack{d\in A(n) \\(d,e)=1}} \frac{\mu_A(d)}{d},
		\end{equation*}
		and for every $e\in A(n)$,
		\begin{equation*}
			\sum_{\substack{d\in A(n) \\(d,e)=1}} \frac{\mu_A(d)}{d}  =
			\prod_{\substack{p^a\mid \mid  n\\ p\nmid e}} \left(1-\frac1{p^t}\right)
		\end{equation*}
		\begin{equation*}
			= \prod_{p^a\mid \mid  n} \left(1-\frac1{p^t}\right) \prod_{\substack{p^a\mid \mid n\\ p\mid e}} \left(1-\frac1{p}\right)^{-1}
			= \frac{\varphi_A(n)}{n}\cdot \frac{e}{\varphi_A(e)},
		\end{equation*}
		where $t=t_A(p^a)$, by using \eqref{varphi_A} and the fact that if $e\in A(n)$ and $e=\prod p^b$, $n=\prod p^a$, then
		$t_A(p^b)=t_A(p^a)$ for all prime powers in question, see \cite[Cor.\ 4.2]{McC1986}.
		
		We obtain that 
		\begin{equation*}
			M_{A,f}(n)= \varphi_A(n) \sum_{e\in A(n)} \frac{(\mu_A*_Af)(e)}{\varphi_A(e)},
		\end{equation*}
		which is identity \eqref{id_f}.
	\end{proof}
	
	\begin{proof}[Proof of Theorem {\rm \ref{Th_Menon_type_2}}] 
		Apply Theorem \ref{Th_chi_gen} to the function $f(n)=\log (x^n-1)$, where $\mu_A*_A f = \log \Phi_{A,\DOT}(x)$.
	\end{proof}
	
	\begin{proof}[Proof of Theorem {\rm \ref{Th_chi_gen}}] 
		We need the following known results, see, e.g., \cite[Ch.\ 9]{MonVau2007}. 
		
		If $\chi$ is a  Dirichlet character (mod $n$) with conductor $d$, then there is a unique primitive character
		$\chi^*$ (mod $d$) that induces $\chi$. That is,
		\begin{equation} \label{induced_char}
			\chi(k) =  \begin{cases} \chi^*(k), & \text{ if $(k,n)=1$}, \\ 0, & \text{ if $(k,n)>1$}.
			\end{cases}
		\end{equation}
		
		Let $\chi$ be a primitive character (mod $n$). Then for every $d\mid n$, $d<n$ and every $s\in \Z$,
		\begin{equation} \label{chi_sum}
			\sum_{\substack{k=1\\ k\equiv s \, \text{\rm (mod $d$)} }}^n \chi(k)=0.
		\end{equation}
		
		We have, according to \eqref{induced_char},
		\begin{equation*}
			S_f:= \sum_{j=1}^n f((j-1,n)_A) \chi(j) = \sum_{\substack{j=1\\ (j,n)=1}}^n f((j-1,n)_A) \chi^*(j)
		\end{equation*}
		\begin{equation*}
			= \sum_{r=1}^d \sum_{\substack{j=1\\ (j,n)=1\\ j\equiv r \text{\rm (mod $d$)} }}^n f((j-1,n)_A) \chi^*(j)
			= \sum_{r=1}^d \chi^*(r) \sum_{\substack{j=1\\ (j,n)=1\\ j\equiv r \text{\rm (mod $d$)} }}^n f((j-1,n)_A).
		\end{equation*}
		
		Here, since $d\mid n$, if $(j,n)=1$ and $j\equiv r$ (mod $d$), then $(r,d)=(j,d)=1$. Therefore,
		the inner sum is empty in the case $(r,d)>1$.
		
		Now assume that $(r,d)=1$. We have
		\begin{equation*}
			T:= \sum_{\substack{j=1\\ (j,n)=1\\ j\equiv r \text{\rm (mod $d$)} }}^n f((j-1,n)_A) 
			= \sum_{\substack{j=1\\ (j,n)=1\\ j\equiv r \text{\rm (mod $d$)} }}^n \sum_{e \in A((j-1,n)_A)}
			(\mu_A *_A f)(e)
		\end{equation*}
		\begin{equation*}
			= \sum_{e\in A(n)} (\mu_A*_A f)(e) \sum_{\substack{j=1\\ (j,n)=1\\ j\equiv r \text{\rm (mod $d$)} \\ 
					j\equiv 1 \text{\rm (mod $e$)} }}^n 1, 
		\end{equation*}
		by property \eqref{prop_gcd_A}. Here the inner sum is 
		\begin{equation*}
			U:= \sum_{\substack{j=1\\ (j,n)=1\\ j\equiv r \text{ (mod $d$)} \\ 
					j\equiv 1 \text{ (mod $e$)} }}^n 1 = 
			\begin{cases} \frac{\varphi(n)(d,e)}{\varphi(de)}= \frac{\varphi(n)\varphi((d,e))}{\varphi(d)\varphi(e)} , & \text{ if $(d,e)\mid r-1$}, 
				\\ 0, & \text{ otherwise},
			\end{cases}
		\end{equation*}
		see the author \cite[Lemma 2.1]{Tot2019}. This gives 
		\begin{equation*}
			S_f=  \sum_{r=1}^d \chi^*(r) \frac{\varphi(n)}{\varphi(d)} \sum_{\substack{e \in A(n)\\ (d,e)\mid r-1 }} \frac{(\mu_A *_A f)(e)}{\varphi(e)} \varphi((d,e)) 
		\end{equation*}
		\begin{equation*}
			= \frac{\varphi(n)}{\varphi(d)} \sum_{e \in A(n)} \frac{(\mu_A *_A f)(e)}{\varphi(e)} \varphi((d,e)) \sum_{\substack{r=1\\ r\equiv 1 \text{ (mod $(d,e))$}}}^d \chi^*(r),
		\end{equation*}
		where by \eqref{chi_sum} the last sum is $0$ unless $(d,e)=d$, that is, $d\mid e$. We deduce 
		\begin{equation*}
			S_f= \frac{\varphi(n)}{\varphi(d)} \sum_{\substack{e \in A(n)\\ d\mid e}} \frac{(\mu_A *_A f)(e)}{\varphi(e)} \varphi(d)
			= \varphi(n)  \sum_{d\delta \in A(n)} \frac{(\mu_A *_A f)(d\delta)}{\varphi(d\delta)},
		\end{equation*}
		finishing the proof of \eqref{Menon_A_new_f}.
		
		If $\chi$ is a primitive character (mod $n$), then its conductor is $d=n$. Therefore, \eqref{Menon_A_new_f} reduces to \eqref{Menon_A_new_primit_f}.
	\end{proof}
	
	\begin{proof}[Proof of Corollary {\rm \ref{Cor_f_real}}]
		If $f$ is a real valued function, then the right hand side of \eqref{Menon_A_new_f}
		is real, and we deduced the given identities.
	\end{proof}
	
	\begin{proof}[Proof of Theorem {\rm \ref{Th_coonect_Ramanujan_sum}}]
		It is known that for every $n>1$ and $|x|<1$,
		\begin{equation} \label{cyclotomic_Ramanujan_exp}
			\Phi_n(x) = \exp \left(-\sum_{k=1}^{\infty} \frac{c_n(k)}{k}x^k \right),
		\end{equation}
		see, e.g., Herrera-Poyatos and Moree \cite[Eq.\ (1.2)]{HerMor2021}. By identities 
		\eqref{Phi_A_irred}, \eqref{cyclotomic_Ramanujan_exp} and \eqref{id_Ramanujan_gamma} we obtain 
		\begin{align*}
			\Phi_{A,n}(x)= \prod_{\substack{d\mid n\\ \gamma_A(n)\mid d}} \Phi_d(x) =\exp \left(-\sum_{k=1}^{\infty}\frac{x^k}{k}\sum_{\substack{d\mid n\\ \gamma_A(n)\mid d}} c_d(k) \right)= \exp \left(-\sum_{k=1}^{\infty} \frac{c_{A,n}(k)}{k}x^k 
			\right),
		\end{align*}
		completing the proof of \eqref{Phi_c}.
	\end{proof}
	
	\begin{proof}[Proof of Theorem {\rm \ref{Th_binom}}]
		We adopt the simple approach concerning the classical M\"oller-Endo formulas due to 
		Gallot et al. \cite[Sect.\ 3]{GalMorHom2011}. Also see Herrera-Poyatos and Moree \cite[Sect.\ 4]{HerMor2021}.
		
		From \eqref{Phi_gen_1_bis} we have 
		\begin{equation*}
			\Phi_{A,S,n}(x) = (-1)^{\varrho_S(n)} \prod_{d=1}^{\infty} \left(1-x^d\right)^{\mu_{A,S}(n/d)}, 
		\end{equation*}
		with the notation $\mu_{A,S}(t)=0$ if $t$ is not an integer. Writing for $|x|<1$ the Taylor series 
		expansion of $(1-x^d)^{\mu_{A,S}(n/d)}$ we deduce 
		\begin{equation*}
			\Phi_{A,S,n}(x) = (-1)^{\varrho_S(n)} \prod_{d=1}^{\infty} \sum_{j_d=0}^{\infty} (-1)^{j_d} 
			\binom{\mu_{A,S}(n/d)}{j_d}x^{dj_d}, 
		\end{equation*}
		and by identifying the coefficient of $x^k$ we obtain \eqref{form_coeff}. 
	\end{proof}

	\begin{proof}[Proof of Theorem {\rm \ref{Th_recursion}}]
		Similar to the classical case, from Vi\`ete's and Newton's formulas we deduce the recursion formula 
		\begin{equation*}
			a_{A,n}(k)= -\frac1{k} \sum_{j=1}^k a_{A,n}(k-j)c_{A,n}(j),
		\end{equation*}
		where $a_{A,n}(0)=1$.
		
		If $n=p^t$ is an $A$-primitive integer, then $A(p^t)=\{1,p^t\}$, and by the H\"older-type identity \eqref{Holder} we have   
		\begin{align*}
			c_{A,p^t}(k) & = \begin{cases} p^t-1, & \text{ if $p^t\mid k$};\\ 
				-1, & \text{ otherwise},
			\end{cases}\\
			& = \mu_A(p^t) \mu_A((k,p^t)_A)\varphi_A((k,p^t)_A).
		\end{align*}
		
		Hence, by multiplicativity, if $n$ is a product of $A$-primitive integers, namely $n=p^{t_1}\cdots p^{t_r}$, then  
		\begin{equation*}
			c_{A,n}(k)= \mu_A(n) \mu_A((k,n)_A)\varphi_A((k,n)_A),
		\end{equation*}
		holds and we deduce that
		\begin{equation*}
			a_{A,n}(k)= -\frac{\mu_A(n)}{k} \sum_{j=1}^k a_{A,n}(k-j) \mu_A((j,n)_A)\varphi_A((j,n)_A),
		\end{equation*}
		as stated.
	\end{proof}


\begin{thebibliography}{99}
		\bibitem{Bac2021} G.~Bachman, Coefficients of unitary cyclotomic polynomials of order three, {\it Integers} {\bf 22} (2022), Paper No. A80, 13 pp. 
		
		\bibitem{Coh1959} E.~Cohen, Arithmetical functions associated with arbitrary sets of integers, {\it Acta Arith.} {\bf 5} (1959), 407--415.
		
		\bibitem{Coh1961} E.~Cohen, An elementary method in the asymptotic theory of numbers, {\it Duke Math. J.} {\bf 28} (1961), 183--192.
		
		\bibitem{GalMorHom2011} Y.~Gallot, P.~Moree, and H.~Hommersom, Value distribution of cyclotomic polynomial coefficients, {\it Unif. Distrib. Theory} {\bf 6} (2011), 177--206.
		
		\bibitem{Hau1988} P.~Haukkanen, Some generalized totient functions, {\it Math. Student} {\bf 56} (1988), 65--74.
		
		\bibitem{HerMor2021} A.~Herrera-Poyatos and P.~Moree, Coefficients and higher order derivatives of cyclotomic polynomials:
		Old and new, {\it Expo. Math.} {\bf 39} (2021), 309--343.
		
		\bibitem{JonTotetal2020} G.~Jones, P.~I.~Kester, L.~Martirosyan, P.~Moree, L.~T\'oth, B.~B.~White, and B.~Zhang, Coefficients of
		(inverse) unitary cyclotomic polynomials, {\it Kodai Math. J.} {\bf 43 (2)} (2020), 325--338.
		
		\bibitem{McC1968} P.~J.~McCarthy, Regular arithmetical convolutions, {\it Portugal. Math.} {\bf 27}  (1968), 1--13. 
		
		\bibitem{McC1986} P.~J.~McCarthy, {\it Introduction to Arithmetical
			Functions}, Springer, 1986. 
		
		\bibitem{MonVau2007} H.~L.~Montgomery and R.~C.~Vaughan, {\it Multiplicative Number
			Theory I. Classical Theory}, Cambridge University Press, 2007.
		
		\bibitem{Mor2009} P.~Moree, Inverse cyclotomic polynomials, {\it J. Number Theory} {\bf 129} (2009), 667--680.
		
		\bibitem{MorTot2020} P.~Moree and  L.~T\'oth, Unitary cyclotomic polynomials, {\it Integers} {\bf 20} (2020), Paper No. A65, 21 pp.
		
		\bibitem{Nag1976} K.~Nageswara~Rao, A generalization of the cyclotomic polynomial, {\it Canad. Math. Bull.} {\bf 19} (1976), 
		461--466.
		
		\bibitem{Nar1963} W.~Narkiewicz, On a class of arithmetical convolutions, {\it  Colloq. Math.} {\bf 10} (1963), 81--94. 
		
		\bibitem{San2022} C.~Sanna, A survey on coefficients of cyclotomic polynomials, {\it Expo. Math.}
		{\bf 40} (2022), 469--494.
		
		\bibitem{Sch2015} W.~Schramm, Eine alternative Produktdarstellung f\"ur die 
		Kreisteilungspolynome, {\it Elem. Math.} {\bf 70} (2015), 137--143.
		
		\bibitem{Sit1978} V.~Sita~Ramaiah, Arithmetical sums in regular
		convolutions, {\it J. Reine Angew. Math.} {\bf 303/304} (1978), 265--283.
		
		\bibitem{Siv1989} R.~Sivaramakrishnan, {\it Classical Theory of Arithmetic Functions},
		Monographs and Textbooks in Pure and Applied Mathematics, Vol.\ 126,
		Marcel Dekker, 1989.
		
		\bibitem{Tot1990} L.~T\'oth, A generalization of the cyclotomic polynomials, {\it Gaz. Mat., Perfec\c{t}. Metod. Metodol. Mat. Inf.} (Bucharest) {\bf 11} (1990), no. 1, 26--29.
		
		\bibitem{Tot2004} L.~T\'oth, Remarks on generalized Ramanujan sums and even functions, {\it Acta Math. Acad. Paedagog. Nyh\'azi. (N. S.)} {\bf 20} (2004), 233--238.
		
		\bibitem{Tot2018} L.~T\'oth, Menon-type identities concerning Dirichlet characters, {\it Int. J. Number Theory} {\bf 14} 
		(2018), 1047--1054.
		
		\bibitem{Tot2019} L.~T\'oth, Short proof and generalization of a Menon-type identity by Li, Hu and Kim, {\it Taiwanese J. Math.} {\bf 23} (2019),
		557--561. 
		
		\bibitem{Tot2023problem} L.~T\'oth, Proposed problem 274, {\it
			Eur. Math. Soc. Mag.} (2023), no.~127, p.~54.
		
		\bibitem{Tot2023} L.~T\'oth, Proofs, generalizations and analogs of Menon's identity: a survey, {\it Acta Univ. Sapientiae Math.} {\bf 15} (2023), no.1, 142--197.
		
		\bibitem{TotHau1996} L.~T\'oth and P.~Haukkanen, A generalization of Euler's $\varphi$-function with respect to a set of polynomials, {\it Ann. Univ. Sci. Budapest. 
			E\"otv\"os Sect. Math.} {\bf 39} (1996), 69--83.
		
		\bibitem{TotHau2000} L.~T\'oth and P.~Haukkanen, On an operator involving regular convolutions, {\it Mathematica} {\bf 42 (65)} (2000), 199--209.
		
	\end{thebibliography}
\end{document}